\documentclass[a4paper]{article}

\usepackage{textcomp}
\usepackage[utf8]{inputenc}
\usepackage[T1]{fontenc}

\usepackage{csquotes}
\usepackage[
    backend=biber,
    doi=false,
    eprint=false,
    giveninits=true,
    isbn=true,
    style=numeric,
    url=false,
    date = year,
]{biblatex}
\AtEveryBibitem{%
  \clearfield{issn}%
}

\usepackage{hyperref}
\usepackage[dvipsnames]{xcolor}
\hypersetup{
    colorlinks,
    linkcolor={red!60!black},
    citecolor={green!40!black}
}

\usepackage[caption=false]{subfig}
\usepackage{wrapfig}
\usepackage{amssymb}
\usepackage{amsmath}
\usepackage{amsthm}
\usepackage{mathtools}
\usepackage{algorithm,algorithmic}
\usepackage{stmaryrd}
\usepackage{booktabs}
\usepackage{nicefrac}
\usepackage{multirow}
\usepackage{enumitem}
\usepackage{graphicx}

\graphicspath{{../overleaf/}}

\definecolor{labelkey}{rgb}{0,0.08,0.45}
\definecolor{refkey}{rgb}{0,0.6,0.0}
\definecolor{Brown}{rgb}{0.45,0.0,0.05}
\definecolor{dgreen}{rgb}{0.00,0.49,0.00}
\definecolor{dblue}{rgb}{0,0.08,0.75}
\definecolor{ffwwqq}{rgb}{1.,0.4,0.}
\definecolor{qqzzqq}{rgb}{0.,0.6,0.}
\definecolor{qqqqff}{rgb}{0.,0.,1.}
\definecolor{dred}{HTML}{D90404}
\definecolor{orng}{HTML}{D35400}
\definecolor{cb-black}      {RGB}{  0,   0,   0}
\definecolor{cb-blue-green} {RGB}{  0,  073,  073}
\definecolor{cb-green-sea}  {RGB}{  0, 146, 146}
\definecolor{cb-rose}       {RGB}{255, 109, 182}
\definecolor{cb-salmon-pink}{RGB}{255, 182, 119}
\definecolor{cb-purple}     {RGB}{ 73,   0, 146}
\definecolor{cb-blue}       {RGB}{ 0, 109, 219}
\definecolor{cb-lilac}      {RGB}{182, 109, 255}
\definecolor{cb-blue-sky}   {RGB}{109, 182, 255}
\definecolor{cb-blue-light} {RGB}{182, 219, 255}
\definecolor{cb-burgundy}   {RGB}{146,   0,   0}
\definecolor{cb-brown}      {RGB}{146,  73,   0}
\definecolor{cb-clay}       {RGB}{219, 209,   0}
\definecolor{cb-green-lime} {RGB}{ 36, 255,  36}
\definecolor{cb-yellow}     {RGB}{255, 255, 109}
\definecolor{bred}{HTML}{FF0000}
\definecolor{bpurp}{HTML}{BF00BF}
\definecolor{bblu}{HTML}{0000FF}
\definecolor{bcyan}{HTML}{00BFBF}
\definecolor{byellow}{HTML}{BFBF00}
\definecolor{bgreen}{HTML}{008000}

\usepackage{todonotes}

\newif\ifTODO 
\TODOtrue

\definecolor{darkgreen}{rgb}{0,0.6,0}

\newtheorem{theorem}{Theorem}[section]
\newtheorem{corollary}[theorem]{Corollary}

\newtheorem{lemma}[theorem]{Lemma}
\newtheorem{example}[theorem]{Example}

\newtheorem{definition}[theorem]{Definition}

\newtheorem{remark}[theorem]{Remark}

\newcommand{\cc}{\mathcal{C}}
\newcommand{\cO}{\mathcal{O}}
\newcommand{\rr}{\mathbb{R}}
\newcommand{\R}{\mathbb{R}}
\newcommand{\nn}{\mathbb{N}}

\newcommand*{\vsepfbox}[1]{%
  \begingroup
    \sbox0{\fbox{#1}}%
    \setlength{\fboxrule}{0pt}%
    \mbox{\kern-\fboxsep\fbox{\unhbox0}\kern-\fboxsep}%
  \endgroup
}
\newcommand{\abs}{\operatorname{abs}}

\newcommand{\cx}{\mathring{x}}
\newcommand{\cz}{\mathring{z}}

\definecolor{darkgreen}{rgb}{0,0.6,0}

\definecolor{darkred}{rgb}{0.6,0.0}

\definecolor{darkblue}{rgb}{0,0,0.6}

\definecolor{lightgraytext}{rgb}{0.6,0.6,0.6}

\DeclareMathOperator*{\argmin}{\mathrm{arg\,min}}

\title{Abs-Smooth Frank-Wolfe Method: Primal-Dual Analysis, Heavy Ball Momentum, and Inexact Oracles}

\author{Sri Harshitha Tadinada\thanks{Institut für Mathematik, Humboldt-Universität zu Berlin, Germany}\and Sebastian Pokutta\thanks{Zuse Institute Berlin, Germany}$\;^,$\thanks{Institut für Mathematik, Technische Universität Berlin, Germany} \and Andrea Walther$^*$$^,$$^\dagger$}

\date{\today}

\begin{document}

\maketitle

\begin{abstract}
We study projection-free optimization for convex objectives that satisfy abs-smoothness,
a structural property that
captures many non-smooth yet piecewise smooth functions arising, e.g., in modern
machine learning models. 
We develop a unified framework for Abs-Smooth Frank-Wolfe methods,
establishing a clean primal-dual analysis
that guarantees convergence without requiring classical smoothness assumptions. 
Our framework extends the available results in two important directions.
First, we introduce a heavy ball momentum variant and show that 
momentum can be incorporated naturally under abs-smoothness while preserving convergence guarantees. 
Second, we analyze inexact minimization oracles, demonstrating robustness to 
approximate inner solutions.
Moreover, we relax the full convexity assumption and study the case where
convexity holds only for the piecewise linear approximations of the objective,
further broadening the applicability of conditional gradient methods to a wider 
class of non-smooth problems.
\end{abstract}

\bigskip
\noindent

\section{Introduction} 
Constrained optimization problems arise in many applications. 
One way to solve such problems is provided by so-called \emph{projection-free} methods, in particular the \emph{conditional gradients method} \cite{CG66} (also known as \emph{Frank-Wolfe algorithm} \cite{FW56}).
This broad family of methods has seen many applications in the smooth setting \cite{Ga16, FWApp3, FWApp2, MBS21, TTP21, FWApp1} and in fact it is mostly formulated for smooth problems with very few exceptions; we refer the interested reader to \cite{CGFWSurvey2022} for a comprehensive overview.

Here, we are interested in solving non-smooth constrained optimization problems.
More specifically, we consider minimizing an abs-smooth \cite{Gr13} function $f\colon\rr^n\to\rr$
subject to a compact and convex constraint set $C\subseteq\rr^n$, i.e., 
\begin{equation}
\label{eq:p}
\underset{x\in C}{\text{min}}\;\; f(x).
\end{equation}
The class of abs-smooth functions can be briefly described as the class generated by smooth 
functions, $\min$, $\max$, and their compositions.
Essentially all functions whose non-differentiability arises due to the absolute-value function
fall into this large class,
including many widely used functions in machine learning,
such as hinge loss from support vector machines (SVMs) and neural network losses with ReLU activations.

For convex, non-smooth optimization problems, classical 
first-order methods achieve sublinear convergence rates in
terms of function values.
In particular, the subgradient method guarantees that the
error $f(x_t)-f(x^*)$ decreases at a rate of 
$\cO(1/\sqrt{t})$ for general convex Lipschitz functions,
where $t$ denotes the iteration number \cite{Nesterov04, Shor85}.
When additional structure such as strong convexity is present,
this rate can be improved to $\cO(1/t)$ \cite{Nesterov04}.
A variety of alternative approaches have been developed to
enhance both theoretical and practical performance,
including bundle methods \cite{Kiwiel95}, 
which aggregate subgradient information over multiple iterations,
and conditional gradient variants tailored to 
non-smooth objectives \cite{Harchaoui15},
which leverage linearization and domain structure.
These methods typically achieve similar sublinear 
convergence guarantees while offering improved robustness
or computational efficiency in large-scale settings.
Moreover, accelerated schemes for non-smooth convex 
optimization, such as those proposed by 
Nesterov~\cite{Nesterov05}, further improve iteration
complexity in terms of dependence on the desired accuracy,
even though the asymptotic rate of $\cO(1/t)$ remains 
characteristic for general non-smooth problems.
Collectively, these results highlight that while
non-smooth convex optimization is intrinsically more
challenging than its smooth counterpart, principled 
algorithmic design can still ensure provably efficient convergence.

An important class of non-smooth but still highly structured problems are problems with so-called abs-smooth objective functions, basically compositions of smooth functions with absolute value functions; we make this precise further below. In \cite{KrPoWaWo23} the \emph{Abs-Smooth Frank-Wolfe algorithm (ASFW)} was introduced and
initial convergence guarantees in the broader context
of abs-smooth, non-convex optimization were established. 
Building upon this foundation, the present paper focuses
on the convex setting and provides a refined theoretical
analysis tailored to convex abs-smooth functions.
We derive explicit convergence rates and show that, 
under suitable assumptions, the proposed method achieves
rates comparable to those known for smooth convex optimization,
as established in classical results for conditional 
gradient methods \cite{Ja13,CG66,FW56}. 
Moreover, we relax the assumption of full convexity of $f$
and also study the case where convexity is required only for
its piecewise linear approximation, 
allowing the analysis to apply to a broader class of functions; albeit with significantly weaker convergence guarantees.
These findings demonstrate that the abs-smooth structure
enables a favorable balance between modeling flexibility
and algorithmic efficiency, thereby bridging the gap between
smooth and non-smooth optimization frameworks.

To accomplish this, we show that the primal-dual gap for convex
abs-smooth functions with agnostic step-sizes of order $\cO(1/t)$ is itself bounded 
by $\cO(1/t)$.
We show this convergence rate for two variants of the Abs-Smooth Frank-Wolfe 
algorithm (ASFW), namely, the \textit{vanilla ASFW} and 
\textit{heavy ball momentum ASFW} 
or heavy ball ASFW for short.
These variants have been appropriately adapted from their respective smooth 
versions of the algorithms \cite{Ja13,HB-fw}, by replacing gradients with
the \emph{abs-linearization} of a given abs-smooth function $f$.
Furthermore, we relax the inner minimization sub-problem of the vanilla 
ASFW \cite{KrPoWaWo23} to accommodate inexact solutions.
The implications of relaxing the inner sub-problem for a polyhedral domain
of interest $C$ are illustrated in later sections of this paper through a 
series of numerical examples, which demonstrate the practical impact of
this relaxation on solution quality, convergence behavior, and computational performance.

This article is organized as follows.
First, we provide a brief overview of the ASFW framework and introduce
the different algorithmic variants considered in this study,
including the vanilla, relaxed vanilla, and heavy ball ASFW methods.
Then, we present a primal-dual convergence analysis for each variant, highlighting 
the theoretical guarantees and key differences among the algorithms,
including the case where convexity is assumed only for the piecewise 
linear approximations of the objective $f$.
Finally, we corroborate the theoretical results through a collection of standard
non-smooth numerical test problems from \cite{BaKaMae14}, 
demonstrating the practical performance and effectiveness of the proposed methods. 

\section{Abs-Smooth Frank-Wolfe algorithm}
We begin this section by first defining abs-smooth functions.
Then, we provide an overview of the Abs-Smooth Frank-Wolfe algorithm 
in its various formulations.
Subsequently, we proceed to prove all the lemmas required for the
convergence results.

\begin{definition}[$\cc_{\abs}^d(\rr^n)$, abs-smooth functions]
\label{def:abssmooth}
For any $d \in \nn$, the set of locally Lipschitz continuous
functions $f: \rr^n \to \rr$, $y=f(x)$, that admit an
\emph{abs-smooth form}
\begin{align}
  \begin{split}
z_i & = F_i(x,z_1,\ldots,z_{i-1},|z_1|,\ldots,|z_{i-1}|),
\qquad i=1,\ldots,s, \\
y & = \varphi(x,z),
\end{split} \label{eq:abs}
\end{align}
for some $s \in \nn \cup \{0\}$, with $F=(F_1,\ldots,F_s)\in \cc^d(\rr^{n+s+s}, \rr^s)$
and $\varphi \in \cc^d(\rr^{n+s},\rr)$, is denoted by $\cc^d_{\abs}(\rr^n)$.
Thus the vector $z=(z_1,\ldots,z_s)$ is defined recursively.
For any $d\ge 1$, a function $f\in \cc^d_{\abs}(\rr^n)$ is called \emph{abs-smooth}.
The components $z_i$, $1 \leq i \leq s$, of $z$ are called \emph{switching variables}.
\end{definition}
The class of abs-smooth functions is quite broad and encompasses a large subset of piecewise smooth functions 
in the sense of Scholtes \cite{Sch12}.
Several variants of the abs-smooth form were developed over time.
As demonstrated in \cite{ShKrBaBeWa23}, these variants are equivalent with respect
to important properties. 
In this work, we use the version that aligns most naturally with our formulation.
We define the abs-smooth class on the whole $\rr^n$ for simplicity.
For the theory developed in this paper, it would suffice that $f$ is
defined on an open set $\cal D$ with our domain of interest $C \subset \cal D$.
Given $f\in\cc_{\abs}^d(\rr^n)$, we consider the \emph{piecewise linearization} of $f$ 
localized at $\cx$ for any $ x \in \rr^n$ given by
\begin{equation}\label{eq:pl_model}
\begin{split}
    f_{PL,\cx}(x) &= d + a^T x + b^Tz,\\
    z &= c + Z x + Mz + L|z|,
\end{split}
\end{equation}
with the matrices and vectors 
\[
\begin{aligned}
Z &= \left.\frac{\partial}{\partial x} F(x, z, |z|)\right|_{(x,z,|z|)=(\cx, z(\cx), |z(\cx)|)}
    \in \mathbb{R}^{s \times n}, \\
M &= \left.\frac{\partial}{\partial z} F(x, z, |z|)\right|_{(x,z,|z|)=(\cx, z(\cx), |z(\cx)|)}
    \in \mathbb{R}^{s \times s} \quad \text{(strictly lower triangular)}, \\
L &= \left.\frac{\partial}{\partial |z|} F(x, z, |z|)\right|_{(x,z,|z|)=(\cx, z(\cx), |z(\cx)|)}
    \in \mathbb{R}^{s \times s} \quad \text{(strictly lower triangular)}, \\
a &= \left.\frac{\partial}{\partial x} \varphi(x, z)\right|_{(x,z)=(\cx, z(\cx))}
    \in \mathbb{R}^n, \;\;\;\;\;\;\;\;\;b = \left.\frac{\partial}{\partial z} \varphi(x, z)\right|_{(x,z)=(\cx, z(\cx))}
    \in \mathbb{R}^s.
\end{aligned}
\]
These derivatives are well defined due to the smoothness assumed for $F$ and $\varphi$. The constants $c\in \rr^s$ and $d\in \rr$ are chosen such that $f(\cx) = f_{PL,\cx}(\cx)$ holds \cite{KrPoWaWo23}.
Hence, $f_{PL,\cx}(\cdot)$ can be viewed as a piecewise linear model of $f$.
Moreover, $f_{PL,\cx}(\cdot)$ can be generated by suitably extended algorithmic differentiation (AD) \cite{Gr13};
see also \cite{Gr13,GW20,KrPoWaWo23} for details and further examples of abs-smooth functions.
The abs-linearization \cite{Gr13} of $f$ at $x$ is given by

\begin{equation}\label{eq:abs_pl}
  \Delta f(x; \cdot-x) = f_{PL,x}(\cdot) - f(x). 
\end{equation}
Note that, due to the choice of constants in equation~(\ref{eq:pl_model}), one has

\begin{equation*}
    \Delta f(x; 0) = f_{PL,x}(x) - f(x) = 0.
\end{equation*}
To illustrate abs-smooth functions and their piecewise linearizations, we next discuss a basic example.
\begin{figure}[t]
\centering
  \includegraphics[width=0.49\textwidth]{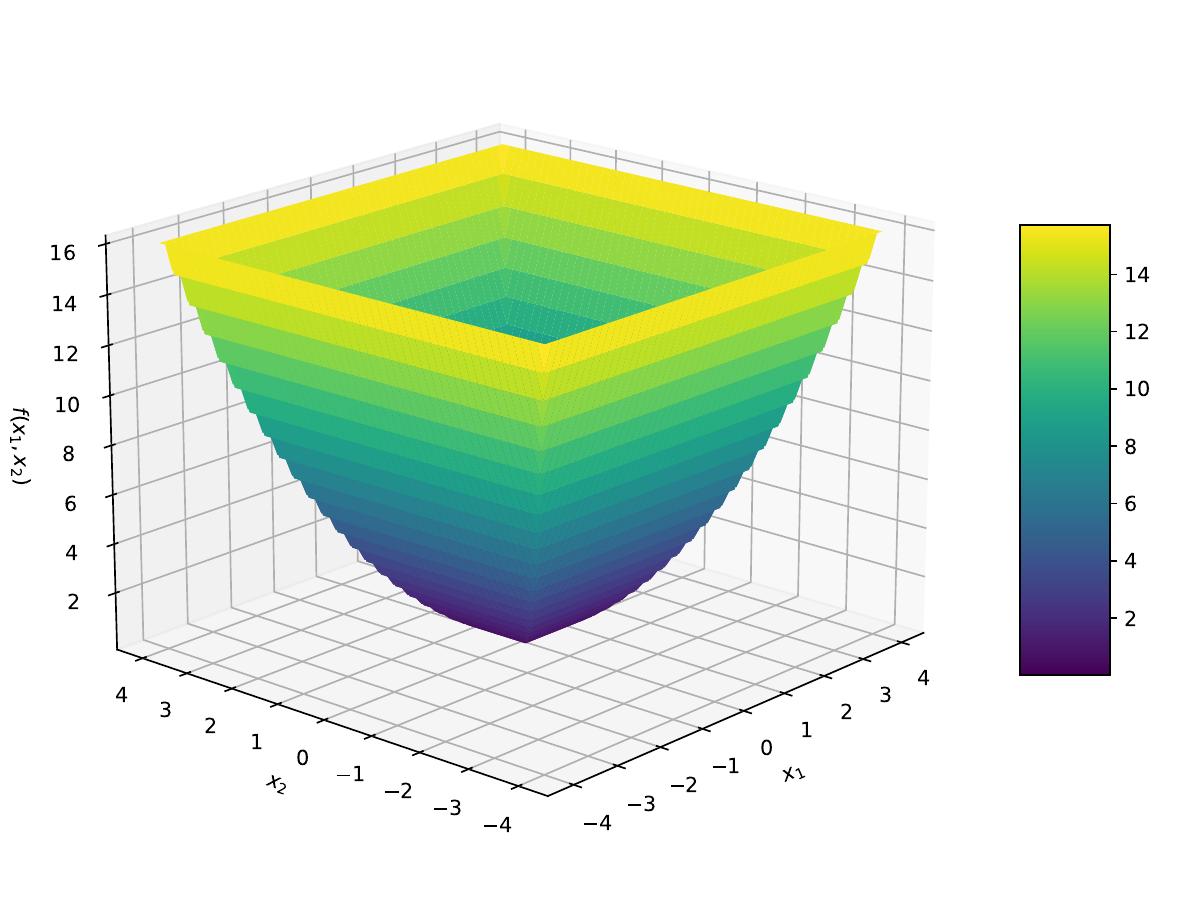}
  \hfill 
  \includegraphics[width=0.49\textwidth]{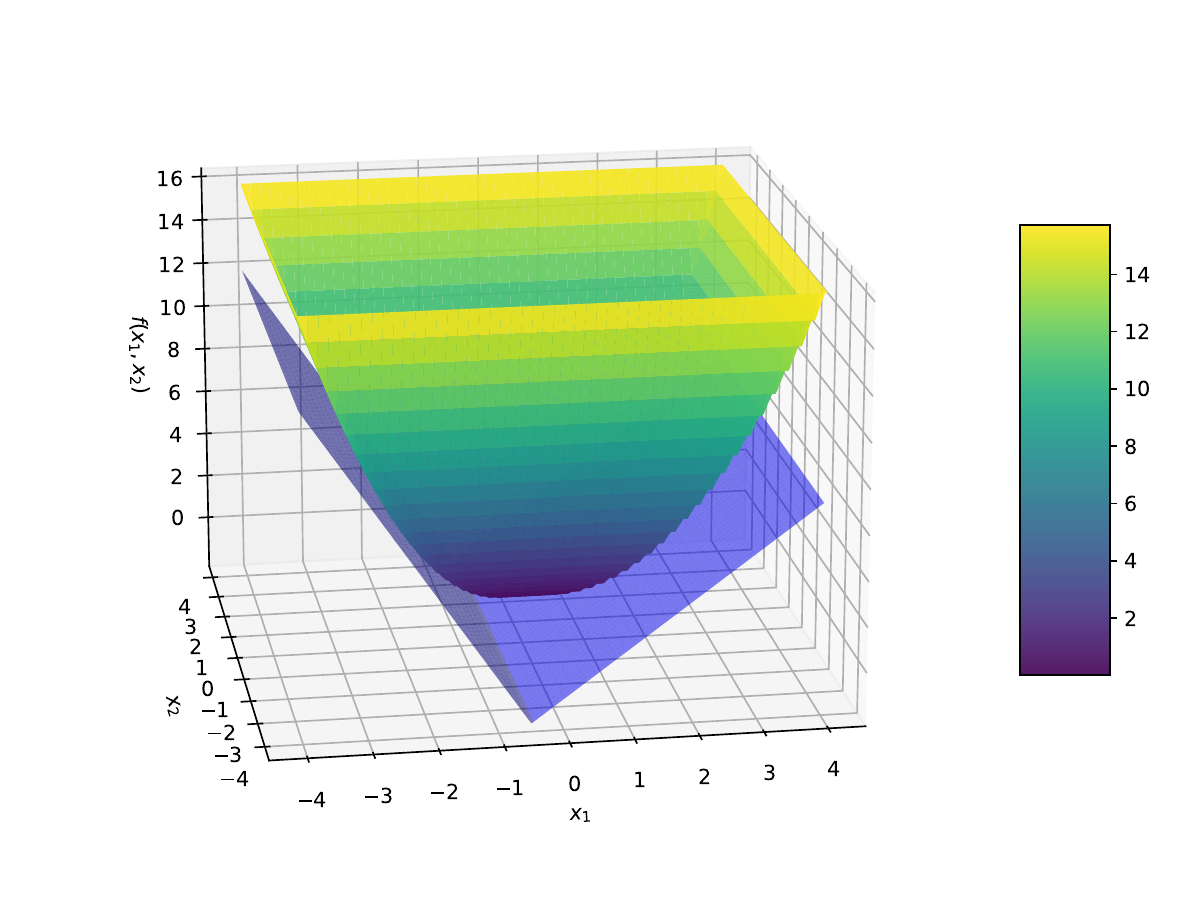}
\caption{Abs-smooth function from Example~\ref{ex} and its piecewise linear model~\eqref{pc_lin_model} in blue.}
\label{fig:example}
\end{figure}

\begin{example}[Simple example]\label{ex}
    The function $f:\rr^2 \rightarrow [0,\infty)$ given by $f(x_1,x_2) = \max(x_1^2,x_2^2)$ is abs-smooth since it can be written in the following form 
    \begin{equation*}
        f(x_1,x_2) = \max(x_1^2,x_2^2) = \frac{1}{2} \big( x_1^2 + x_2^2 + |x_1^2 - x_2^2|\big).
    \end{equation*}
Then one obtains the abs-smooth representation
    \begin{align*}
        & z_1 = x_1^2 - x_2^2,\\
        & z_2 = |z_1|,\\
        & y = 0.5 x_1^2 + 0.5 x_2^2 + 0.5 z_2.
    \end{align*}
    For this simple example, the abs-linearization is given by $\Delta f(\cx; \cdot): \rr^2 \rightarrow \rr$, with
    \begin{align}\label{pc_lin_model}
        \Delta f(\cx; \Delta x) & = \cx_1 \Delta x_1 + \cx_2 \Delta x_2
        + 0.5 (| \cz_1 + \Delta \cz_1| - |\cz_1|)\\
        & = \cx_1 \Delta x_1 + \cx_2 \Delta x_2 + 0.5 \big( |\cx_1^2 - \cx_2^2
        + 2\cx_1 \Delta x_1 - 2\cx_2 \Delta x_2| - |\cx_1^2 - \cx_2^2|\big).
    \end{align}
    The function itself together with its piecewise linear model $f_{PL,\cx}(\cdot)$ at the point $\cx = (-2,1)$ is depicted in Figure~\ref{fig:example}.
\end{example}

The abs-smooth functions are guaranteed to possess directional derivatives.
Furthermore, within a neighborhood around the evaluation point $\cx$, the directional derivatives coincide with the piecewise linear model based on abs-linearization \cite{Gr13}. 
A primary characteristic of the piecewise linearization is that it approximates $f$ by explicitly taking its non-smoothness into account.
To establish stronger convergence results in the convex case, we utilize the following approximation property of the abs-linearization.

\begin{theorem}
\label{thm:approx}
Suppose $f$ is abs-smooth on $C \subseteq {\cal D} \subseteq \rr^n$, ${\cal D}$ open, $C$ compact and convex. Then, there exists $\gamma > 0$ such that for all $x, \cx \in C$
\begin{align}
\label{eq:smooth}
|f(x)\!-\!f_{PL,\cx}(x)| = |f(x)\!-\!f(\cx)\!-\!\Delta f(\cx; x\!-\!\cx)| \leq \gamma \|x-\cx\|^2.
\end{align}
\end{theorem}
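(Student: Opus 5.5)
The plan is to compare the true evaluation of the abs-smooth form \eqref{eq:abs} at $x$ with the piecewise linear recursion \eqref{eq:pl_model} at $x$. We work switching variable by switching variable, using induction over $i=1,\ldots,s$. Write $z=z(x)$ and $\cz=z(\cx)$ for the true switching vectors, and let $\tilde z$ be the solution of the recursion $\tilde z = c + Zx + M\tilde z + L|\tilde z|$. This solution is uniquely and explicitly defined because $M$ and $L$ are strictly lower triangular. The constant $c$ is chosen so that $\tilde z$ equals $\cz$ at $x=\cx$, which makes the recursion read
\[
\tilde z_i-\cz_i = Z_i(x-\cx) + M_i(\tilde z-\cz) + L_i(|\tilde z|-|\cz|).
\]
The goal is the intermediate bound $\|z(x)-\tilde z\|\le \kappa\|x-\cx\|^2$ for all $x,\cx\in C$, with a uniform constant $\kappa$.

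First I collect the uniform constants. The function $z(\cdot)$ is a composition of $\cc^1$ maps and the Lipschitz map $|\cdot|$, so it is Lipschitz on the compact set $C$, say with constant $L_z$. Since $|\cdot|$ is $1$-Lipschitz, the $\tilde z$ recursion is also Lipschitz in $x$ with a constant bounded uniformly in $\cx$; this uses the fact that $Z,M,L$ depend continuously on $\cx$ and are therefore bounded on $C$. Hence both $\|z-\cz\|$ and $\|\tilde z-\cz\|$ are $O(\|x-\cx\|)$. The arguments $(x,z,|z|)$ for $x\in C$ range over a compact set $K$, so $\|\cdot\|$ is bounded on $K$. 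I will assume $F$ and $\varphi$ are $\cc^2$, or at least have Lipschitz derivatives, on a convex neighborhood of the convex hull of $K$. This is implicit in $d\ge1$ and may need $d\ge 2$ or $\cc^{1,1}$; I would state this explicitly.

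For the induction step, apply Taylor's theorem with remainder to $F_i$ at $w_0=(\cx,\cz,|\cz|)$, evaluated at $w=(x,z,|z|)$:
\[
z_i-\cz_i = Z_i(x-\cx)+M_i(z-\cz)+L_i(|z|-|\cz|) + r_i,\qquad |r_i|\le \tfrac{\beta}{2}\|w-w_0\|^2 .
\]
Because $\||z|-|\cz|\|\le\|z-\cz\|\le L_z\|x-\cx\|$, the remainder satisfies $|r_i|\le \beta'\|x-\cx\|^2$. Subtracting the recursion for $\tilde z$ gives
\[
|z_i-\tilde z_i|\le \|M_i\|\,\|(z-\tilde z)_{<i}\| + \|L_i\|\,\|(z-\tilde z)_{<i}\| + \beta'\|x-\cx\|^2 ,
\]
where I have used $\bigl||z_j|-|\tilde z_j|\bigr|\le|z_j-\tilde z_j|$ and strict triangularity. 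Inducting on $i$ yields $\|z-\tilde z\|\le\kappa\|x-\cx\|^2$. Finally I apply the same Taylor expansion to $\varphi$ at $(\cx,\cz)$, recalling $f_{PL,\cx}(x)=f(\cx)+a^T(x-\cx)+b^T(\tilde z-\cz)$ by the choice of $d$. This gives
\[
|f(x)-f_{PL,\cx}(x)|\le \|b\|\,\|z-\tilde z\| + \tfrac{\beta}{2}\|(x-\cx,z-\cz)\|^2\le\gamma\|x-\cx\|^2 .
\]

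The main obstacle is the uniformity of the constants over all pairs $x,\cx\in C$, rather than just locally near $\cx$. The Taylor remainder must be applied along segments that may leave the image set, since the segment from $(\cx,\cz,|\cz|)$ to $(x,z,|z|)$ need not lie in $K$. I would handle this by taking a compact convex superset, namely the convex hull of $K$, and requiring $F$ and $\varphi$ to be $\cc^{1,1}$ on a neighborhood of it. If $F$ is only defined near $K$, an alternative is the following: for $\|x-\cx\|\ge\delta$ the claimed bound is trivial from boundedness of $f$ and $f_{PL,\cx}$ on $C$, with $\gamma\ge 2\sup|\cdot|/\delta^2$. The local case $\|x-\cx\|<\delta$ then uses a tubular neighborhood of $K$.
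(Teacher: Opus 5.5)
Your argument is correct and is essentially the proof behind the paper's citation of \cite[Proposition~1]{Gr13}: you push an $O(\|x-\cx\|^2)$ error through the recursive evaluation structure by induction. The ingredients are Taylor bounds for the smooth parts, the $1$-Lipschitz property of $|\cdot|$, compactness for uniform constants, and the choice $c=\cz-Z\cx-M\cz-L|\cz|$, which you rightly make explicit. Your regularity caveat is also well founded: with merely $\cc^1$ data the bound fails (e.g.\ $s=0$, $f(x)=|x|^{3/2}$ at $\cx=0$), so $\cc^{1,1}$ or $d\ge 2$ is genuinely needed, a requirement the paper's hypothesis ``$d\ge 1$'' leaves implicit.
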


\begin{proof}
See \cite[Proposition~1]{Gr13}.
\end{proof}

Theorem~\ref{thm:approx} bounds the approximation error via the piecewise linearization.
It can also be viewed as the worst-case error that is being induced by the bounded feasible 
region $C$ on our approximate model.
Hence, one can interpret the piecewise linear model as a generalized Taylor
expansion at $\cx$ which accounts for non-smoothness and also maintains second-order accuracy. 
 
\begin{algorithm}[t]
\caption{Abs-Smooth Frank-Wolfe and Heavy ball ASFW algorithms}
\label{alg:asfw_hbasfw}  
\begin{algorithmic}[1]
\REQUIRE Point $x_0\in C$, abs-smooth function $f$, weights $a_t$,
$\cc_f >0$ and 
$\epsilon\geq 0$.
\FOR{$t=0$ \textbf{to} $\dotsc$}
\STATE Choose step-size $\alpha_t$  
\STATE \textcolor{gray}{$\diamond$ Choose either \ref{a:s2} (relaxed-ASFW) or \ref{a:hb_s2} (HB-ASFW) for the entire run}
\STATE Compute $v_t\in C$ such that $\Delta f(x_t;\alpha_t(v_t-x_t))
\leq \min_{w\in C}\Delta f(x_t;\alpha_t(w-x_t)) 
+ \frac{1}{2}\epsilon\alpha_t^2 \cc_f$
\label{a:s2}
\STATE Compute $v_t\in C$ such that $\sum\limits_{i=0}^{t} a_i \frac{\Delta f(x_i;\alpha_i(v_t-x_i))}{\alpha_i}
\leq \min_{v\in C} \sum\limits_{i=0}^{t} a_i \frac{\Delta f(x_i;\alpha_i(v-x_i))}{\alpha_i}
+ \epsilon a_t\alpha_t \cc_f$
\label{a:hb_s2}
\STATE $x_{t+1}= (1-\alpha_t)x_t+\alpha_tv_t$ 
\ENDFOR
\end{algorithmic}
\end{algorithm}
Throughout the paper we consider agnostic step-sizes of the form 
$\alpha_t = 2/(t+2)$ for the sake of simplicity.
A similar analysis can also be done for step-sizes of the form
$\alpha_t = l/(t+l), \; l\in \mathbb{N}_{\geq 2}$, 
which is to be addressed in subsequent research. 
By setting $\epsilon=0$, the method reduces to
the \emph{vanilla} ASFW algorithm proposed in \cite{KrPoWaWo23}.
For $\epsilon > 0$, Algorithm~\ref{alg:asfw_hbasfw} yields a relaxed version of ASFW. 
In Step~\ref{a:s2} of Algorithm~\ref{alg:asfw_hbasfw} we solve a piecewise 
linear sub-problem as opposed to a linear sub-problem in the smooth case.
Step~\ref{a:s2} need not be solved exactly,
and we allow $v_t$ to
be an approximate solution of the piecewise linear sub-problem.
The constant $\cc_f$ represents the curvature constant associated with the 
abs-smooth function under consideration, which is defined in the next section.
 
\begin{algorithm}[t]
\caption{Adapted ASM (AASM)}
\label{alg:aasm}  
\begin{algorithmic}[1]
\REQUIRE $\cx\in \mathbb{R}^n$, abs-linearization
$\Delta f(\cx,\alpha(\cdot-\cx))$
\STATE Initialize $P\leftarrow P_{\sigma(\cx)}$
\FOR{$r=0$ \textbf{to} $\dotsc$}
\STATE Compute $v_*\in\arg\min\limits_{v\in\overline{P}\cap C}\Delta f(\cx,\alpha(v-\cx))$
\label{aasm:2}
\IF{$v_*\in C$ local minimizer of $\Delta f(\cx,\alpha(\cdot-\cx))$} 
\STATE
Return $v_*$.
\ELSE
\STATE
$P \leftarrow P_{\sigma^+}$ which guarantees $P_{\sigma^+}\cap C\neq\varnothing$ and descent of $\Delta f(\cx,\alpha(\cdot-\cx))$
\label{aasm:pplus}
\ENDIF
\ENDFOR
\end{algorithmic}
\end{algorithm}
Algorithm~\ref{alg:aasm} provides a suitable adaptation of the
Active Signature Method (ASM) \cite{GW18} when the feasible set 
$C$ is polyhedral.
Linear equality and inequality constraints describing 
$C$ can be incorporated directly into the working polyhedron,
so that each sub-problem is defined over a single polyhedral
domain $P$.
Since $C$ is compact, the abs-linear model $\Delta f(\cx;\alpha(\cdot-\cx))$
admits a minimizer on every nonempty sub-domain, 
allowing the quadratic regularization required in the
unconstrained case \cite{GW18} to be omitted.
As a result, the model is linear on each polyhedral region,
referred to as a signature domain $P_{\sigma(\cx)}$,
defined by fixing the signs of the absolute-value terms 
in the abs-linear representation. 
More precisely, if the model contains $s$ 
absolute-value expressions,
then 
for a fixed \emph{signature vector} $\sigma\in\{-1,0,1\}^s$, the inverse image 
$$ P_\sigma  \; = \;  \{  x \in \R^n : sgn(z( x)) = \sigma \}   $$
is called a {\em signature domain}. These regions
$(P_\sigma)_{\sigma\in\{-1,0,1\}^s}$ are relatively open polyhedra
that form a partition of $\R^n$.
On each such region, 
the model reduces to a linear function.

Consequently, the sub-problem in Line~\ref{aasm:2} of Algorithm~\ref{alg:aasm}
reduces
to a single linear program, possibly with up to $s$ 
additional linear inequalities compared to optimization over 
$C$ alone, where $s$ denotes the number of absolute-value
terms defining the piecewise linear structure.
The algorithm then follows the standard ASM logic,
checking the optimality conditions of \cite{GW18} with $Q=0$.
If optimality is not detected, it transitions to a new polyhedron 
$P_{\sigma^+}$ guaranteeing descent. 
In practice, this augmented ASM (AASM) typically
requires only a small number of LP solves, though 
in worst-case scenarios it may visit all signature
domains in a Klee-Minty-type exhaustive search \cite{GW18}.
For details, see \cite{KrPoWaWo23, paper1}.
Furthermore, note that Algorithm~\ref{alg:aasm} computes
a global minimizer if the abs-linearization is convex
but only a local one if this is not the case.
This flexibility is partially reflected in Algorithm~\ref{alg:asfw_hbasfw}
through the inexact-solve option.
Alternatively, for the relaxed variant of the sub-problem
in Step~\ref{a:s2} of Algorithm~\ref{alg:asfw_hbasfw},
we may restrict the access to the number of
LP calls arising from the AASM algorithm.

A sketch of the
\emph{heavy ball} version of the ASFW algorithm is presented in
Algorithm~\ref{alg:asfw_hbasfw}.
We briefly explain the idea of the \emph{heavy ball} ASFW, as sketched in 
Algorithm~\ref{alg:asfw_hbasfw}. 
Similar to the Conditional Gradient momentum method in the smooth setting 
\cite{HB-fw}, which uses the weighted average of previous and current gradients 
to compute the next one, we give an appropriate adaptation when $f$ is abs-smooth.
The main idea of this algorithm is to use the data of previous abs-linearizations
in order to compute the next iterate.
Unlike the momentum based methods such as those in \cite{La-Ja15,NY18}, which 
require solving two sub-problems per iteration to evaluate the primal-dual error, 
our approach solves a single piecewise linear sub-problem at each iteration, 
while still attaining a comparable error bound. 
Even though computing and storing the previous abs-linearizations can be 
computationally demanding and memory-intensive, this variant offers meaningful 
theoretical insights.
Although it may not consistently outperform the vanilla algorithm in practice, 
we include a detailed sketch and analysis of the heavy ball momentum ASFW to 
highlight its potential and underlying principles.
The choice of weights $a_t$ from Algorithm \ref{alg:asfw_hbasfw}, which becomes 
clearer in the following sections, emphasizes more recent abs-linearizations by 
assigning them greater weight.

\subsection{Convergence results}\label{sec:conv_results}
Now we introduce some useful definitions before stating the main results 
of this section.
\begin{definition}(Convex function)
    A function $f:C\subseteq\rr^n \rightarrow \rr$ is convex if for any $x,y \in C$ and any $\lambda \in [0,1]$ one has
    \begin{equation*}
        f(\lambda x + (1-\lambda) y) \leq \lambda f(x) + (1-\lambda) f(y).
    \end{equation*}
\end{definition}
\begin{definition}(Convex piecewise linearization)
Let $f\in \cc^d_{\abs}(\rr^n)$ with its piecewise linearization at $x$ given by $f_{PL,x}(\cdot)$.
We say $f_{PL,x}(\cdot)$ is convex at $x$ if for any $y,z \in C$ and any $\lambda \in [0, 1]$
one has
\begin{equation*}
    f_{PL,x}(\lambda y + (1-\lambda)z) \leq \lambda f_{PL,x}(y) + (1-\lambda)f_{PL,x}(z).
\end{equation*}    
\end{definition}
Equivalently, one can rewrite the definition of convex piecewise linearization using the
abs-linearizations $\Delta f(x; \cdot -x)$ with the help of equation~(\ref{eq:abs_pl}) as follows
\begin{equation}\label{eq:cvx_abs}
    \Delta f (x; \lambda (y-z) + z-x) \leq \lambda \Delta f(x;(y-x)) + (1-\lambda) \Delta f(x;(z-x)),
\end{equation}
with $\lambda \in [0,1]$.
If either $y=x$ or $z=x$ in equation~(\ref{eq:cvx_abs}),
then for any $\alpha\in [0,1]$,
and with $x$ denoting the point of evaluation,
the convex abs-linearization satisfies
\begin{equation*}
    \Delta f (x; \alpha \Delta x) \leq \alpha \Delta f(x; \Delta x),
\end{equation*}
given any $\Delta x = \cdot - x \in \rr^n$.
We rely on this equivalent formulation of convex piecewise linearization 
throughout the sequel. A key ingredient in the convergence
analysis of Frank-Wolfe type algorithms
is a measure of the \emph{nonlinearity} of the objective function $f$
over the domain $C$. 
The curvature constant $\cc_f$ of a convex and differentiable function 
$f:\rr^n\rightarrow \rr$, with respect to the convex compact 
domain $C$ is defined as
\begin{equation*}
        \cc_f := \sup_{\substack{x,v \in C, \\ \alpha \in (0,1], \\ y = x + \alpha (v-x)}} 
\frac{2}{\alpha^2} \Big( f(y) - f(x) - \langle y-x, \nabla f(x) \rangle \Big).
\end{equation*}
In the smooth setting \cite{Ja13} the curvature constant measures the
deviation of $f$ at $y$ from the linearization of $f$ at $x$. 
In our setting, we modify $\cc_f$ so that it also captures
the deviation of $f$ at $y$ from the piecewise linear 
approximation of $f$ around $x$.
Since the piecewise linearization need not be a supporting model, we bound this deviation in absolute value.
\begin{definition}(Curvature constant for abs-smooth functions)
\label{def:curvature}
    Let $f\in \cc^d_{\abs}(\rr^n)$ be a function over the compact convex set $C\subseteq \rr^n$.
    We call a constant $\cc_f\in[0,\infty)$ a \emph{curvature bound} of $f$ on $C$ if for all
    $x,v \in C$ and $\alpha \in (0,1]$, with $y = x + \alpha (v-x)$, one has
    \begin{equation}
        \big|f(y) - f(x) - \Delta f(x; y-x)\big| \leq \frac{\alpha^2}{2}\cc_f.
    \end{equation}
    The smallest such bound is given by
    \begin{equation}
        \cc_f := \sup_{\substack{x,v \in C, \\ \alpha \in (0,1], \\ y = x + \alpha (v-x)}} 
\frac{2}{\alpha^2} \big| f(y) - f(x) - \Delta f(x; y-x) \big|.
    \end{equation}
\end{definition}
For instance, if $f$ is piecewise linear, then we have $\cc_f = 0$. 
Theorem~\ref{thm:approx} implies that $\cc_f$ is finite on compact $C$ and satisfies
$\cc_f \le 2\gamma D^2$, where $D$ is the diameter of $C$ and $\gamma$ is the constant in~(\ref{eq:smooth}).
In particular, one may use the explicit curvature bound $\cc_f := 2\gamma D^2$ in the statements below.
\begin{definition}\label{def:primal_gap}(Primal gap)
The primal gap of a function $f:C\subseteq\rr^n\rightarrow\rr$ at
point $x$ is given by 
\begin{equation}
  h(x)=\max\limits_{y\in C}\big(f(x) - f(y)\big) = f(x) - f(x^*)  
\end{equation}
with $x^*$ being a minimizer of $f$.
\end{definition}
In general, the knowledge of a minimizer $x^*$ or the optimal
function value $f(x^*)$ is required to compute the primal gap.
A usual remedy to this situation is to make use of the
\textit{Frank-Wolfe gap}, which can be computed without the
knowledge of $x^*$,
bringing us the definition of the Frank-Wolfe gap 
for abs-smooth functions, as first introduced in \cite{KrPoWaWo23}. 

\begin{definition}\label{def:fw_gap}(Abs-Smooth Frank-Wolfe gap)
The Frank-Wolfe gap for an abs-smooth function $f:C\subseteq\rr^n\rightarrow\rr$, with step-size $\alpha\in(0,1]$ is
defined as
\begin{equation}
    g(x) = \max\limits_{v\in C}\frac{-\Delta f(x;\alpha(v-x))}{\alpha},
\end{equation}
with $\Delta f(x;(\cdot-x))$ being the abs-linearization of $f$ at $x$.
\end{definition}
In the smooth setting, we know that
$\Delta f(x;\cdot) = \langle \nabla f(x),\cdot\rangle$ \cite{Gr13}, 
and we recover the traditional Frank-Wolfe gap for smooth functions, which upper bounds the (standard) dual gap $\langle \nabla f(x), x - x^*\rangle$ for any optimal solution $x^*$; slightly abusing notions here we will refer to the Abs-smooth Frank-Wolfe gap also as \emph{dual gap}. 
For any iterate $x_t$ and step-size $\alpha_t$, the (exact) dual gap is
\[
    g(x_t) = \max_{v\in C}\frac{-\Delta f(x_t;\alpha_t(v-x_t))}{\alpha_t}.
\]
If the inner sub-problem in Step~\ref{a:s2} is solved exactly (i.e., $\epsilon=0$) and $v_t$ is a minimizer, then the dual gap can be evaluated as
\begin{equation}\label{eq:gx_t}
    g(x_t) = \frac{-\Delta f(x_t;\alpha_t(v_t-x_t))}{\alpha_t} = \frac{f(x_t)-f_{PL,x_t}(\tilde x)}{\alpha_t}\qquad\text{where}\;\tilde x = \alpha_tv_t-(\alpha_t-1)x_t,
\end{equation}
with the abs-linearization localized at $x_t$.
For inexact inner solves, we will refer to the computable quantity $\hat g_t := -\Delta f(x_t;\alpha_t(v_t-x_t))/\alpha_t$, which satisfies $\hat g_t\le g(x_t)$ and $\hat g_t\ge -\frac{\epsilon\alpha_t}{2}\cc_f$.
We will use the shorthand $g_t := g(x_t)$ (with step-size $\alpha_t$) and $h_t := h(x_t)$.
In particular, if the inner sub-problem is solved exactly, then $\hat g_t = g_t$.
In particular, $g(x_t)\ge 0$ for all $t\ge 0$ since $v=x_t$ yields $\Delta f(x_t;0)=0$.
Moreover, \cite{KrPoWaWo23} shows that for convex $f$,
\[
\min\limits_{v\in C} \Delta f(\cx; v-\cx) = 0
\]
if and only if
$f(\cx) = \min\limits_{x\in C}f(x)$.
This establishes an optimality criterion, but offers no insight
into its tightness, which is clarified in the following lemma.
\begin{lemma}(Dual bound)\label{l:lb}
For a convex $f\in \cc^d_{\abs}(\rr^n)$, let $x^*$ be a minimizer
of $f$ on the convex compact set $C$. Then, one has for any 
$x\in C$, $\alpha \in (0,1]$ and $\epsilon \geq 0$ arbitrary, that
\begin{equation*}
f(x) - f(x^*) \leq \frac{-\Delta f(x;\alpha(v-x))}{\alpha} 
+ \frac{\alpha}{2}  \cc_f (1+\epsilon),
\end{equation*}
where $v$ is an appropriate piecewise linear minimizer,
i.e.,
\begin{equation*}
    \Delta f(x;\alpha(v-x)) \leq \min_{w\in C}\Delta f(x;\alpha(w-x))
+ \frac{1}{2}\epsilon_\alpha \cc_f,
\end{equation*}
with $\epsilon_\alpha = \epsilon \alpha^2$.
\end{lemma}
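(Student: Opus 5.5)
The plan is to compare $f$ along the segment from $x$ toward the minimizer $x^*$ with the abs-linearization at $x$, using convexity of $f$ and the curvature bound of Definition~\ref{def:curvature}. Set $y = x + \alpha(x^*-x) \in C$. Convexity of $f$ on $C$ gives
\begin{equation*}
f(y) \le (1-\alpha) f(x) + \alpha f(x^*), \qquad\text{i.e.}\qquad \alpha\bigl(f(x)-f(x^*)\bigr) \le f(x) - f(y).
\end{equation*}
So it suffices to bound $f(x)-f(y)$ from above by $-\Delta f(x;\alpha(v-x)) + \tfrac{\alpha^2}{2}\cc_f(1+\epsilon)$. Dividing by $\alpha>0$ then yields the claim.

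Next, apply Definition~\ref{def:curvature} with $v$ replaced by $x^*$ and the given $\alpha$, so that $y-x = \alpha(x^*-x)$. This gives
\begin{equation*}
f(y) \ge f(x) + \Delta f(x;\alpha(x^*-x)) - \frac{\alpha^2}{2}\cc_f ,
\end{equation*}
hence $f(x)-f(y) \le -\Delta f(x;\alpha(x^*-x)) + \tfrac{\alpha^2}{2}\cc_f$. Since $x^*\in C$, we have $\Delta f(x;\alpha(x^*-x)) \ge \min_{w\in C}\Delta f(x;\alpha(w-x))$. The inexactness condition on $v$ then gives $\min_{w\in C}\Delta f(x;\alpha(w-x)) \ge \Delta f(x;\alpha(v-x)) - \tfrac12\epsilon\alpha^2\cc_f$. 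Chaining these inequalities,
\begin{equation*}
\alpha\bigl(f(x)-f(x^*)\bigr) \le -\Delta f(x;\alpha(v-x)) + \frac{\alpha^2}{2}\cc_f + \frac{\epsilon\alpha^2}{2}\cc_f,
\end{equation*}
and dividing by $\alpha$ gives exactly the stated bound.

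There is no real obstacle here. The one point to watch is that the curvature bound must be used in its two-sided (absolute value) form, because the piecewise linearization need not be a supporting model. We need the lower bound on $f(y)$, which is precisely why Definition~\ref{def:curvature} uses $|\cdot|$. Note also that convexity is used only for $f$ itself, on the segment between $x$ and $x^*$, which lies in $C$ by convexity of $C$. No convexity of the abs-linearization is needed.
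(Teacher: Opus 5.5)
Your proof is correct and follows essentially the same route as the paper: convexity of $f$ along the segment toward $x^*$, the lower half of the two-sided curvature bound at $y = x+\alpha(x^*-x)$, and then passing from $x^*$ to the minimum over $C$ and on to the inexact minimizer $v$. The only cosmetic difference is that the paper phrases this last step through the dual gap $g(x)$ rather than chaining through $\min_{w\in C}\Delta f(x;\alpha(w-x))$ directly.
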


\begin{proof}
    Let $y = (1-\alpha)x + \alpha x^* \in C$. By convexity of $f$,
    $f(y) \le (1-\alpha)f(x) + \alpha f(x^*)$, which implies
    \begin{equation*}
        f(y) - f(x) \le \alpha (f(x^*)-f(x)).
    \end{equation*}
    By Definition~\ref{def:curvature} (curvature bound) applied to $y = x+\alpha(x^*-x)$, we have
    $|f(y) - f(x) - \Delta f(x; \alpha(x^*-x))| \le \frac{\alpha^2}{2}\cc_f$, and hence
    \begin{equation*}
        f(y) - f(x) \ge \Delta f(x; \alpha(x^*-x)) - \frac{\alpha^2}{2}\cc_f.
    \end{equation*}
    Combining the two inequalities yields
    \begin{equation*}
        \alpha (f(x^*) - f(x)) \ge \Delta f(x; \alpha(x^*-x)) - \frac{\alpha^2}{2}\cc_f,
    \end{equation*}
    and dividing by $\alpha$ and rearranging gives
    \begin{equation*}
        f(x) - f(x^*) \le \frac{-\Delta f(x; \alpha(x^*-x))}{\alpha} + \frac{\alpha}{2}\cc_f.
    \end{equation*}
    Finally, since $v$ is an approximate minimizer of the inner problem, one has
    \begin{equation*}
        \min_{w\in C}\Delta f(x;\alpha(w-x)) \ge \Delta f(x;\alpha(v-x)) - \frac{1}{2}\epsilon_\alpha \cc_f,
    \end{equation*}
    and therefore
    \begin{equation*}
        g(x) = -\frac{1}{\alpha}\min_{w\in C}\Delta f(x;\alpha(w-x))
        \le \frac{-\Delta f(x;\alpha(v-x))}{\alpha} + \frac{\epsilon_\alpha}{2\alpha}\cc_f.
    \end{equation*}
    Using $\epsilon_\alpha = \epsilon\alpha^2$ and $-\Delta f(x;\alpha(x^*-x))/\alpha \le g(x)$ gives
    \begin{equation*}
        f(x) - f(x^*) \le \frac{-\Delta f(x;\alpha(v-x))}{\alpha} + \frac{\alpha}{2}\cc_f(1+\epsilon).
    \end{equation*}
\end{proof}
   
\begin{remark}\label{rmk:lb}
Let $f$ be convex on $C$ and let $x^*\in C$ be a minimizer of $f$. Then for any $x\in C$ and $\alpha\in(0,1]$,
\begin{equation}\label{eq:lb}
    f(x^*) \geq f(x) + \frac{\Delta f(x;\alpha(x^*-x))}{\alpha} - \frac{\alpha}{2} \cc_f.
\end{equation}
\end{remark}
\begin{lemma}(Dual bound for piecewise linearization)\label{l:dual_bound_pc_lin}
For an $f\in \cc^d_{\abs}(\rr^n)$ with convex piecewise linearization $f_{PL,x}(\cdot)$
at every $x\in C$,
let $x^*$ be a minimizer of $f$ on the convex compact set $C$.
Then, one has for any $x\in C$, $\alpha \in (0,1]$ and $\epsilon \geq 0$ arbitrary, that 
\begin{equation*}
    f(x) - f(x^*) \leq \frac{-\Delta f(x;\alpha(v-x))}{\alpha} 
+ \frac{1}{2} \big( 1 + \alpha\epsilon \big) \cc_f,
\end{equation*}
where $v$ is an appropriate piecewise linear minimizer,
i.e.,
\begin{equation*}
    \Delta f(x;\alpha(v-x)) \leq \min_{w\in C}\Delta f(x;\alpha(w-x))
+ \frac{1}{2}\epsilon_\alpha \cc_f,
\end{equation*}
with $\epsilon_\alpha = \epsilon \alpha^2$.
\end{lemma}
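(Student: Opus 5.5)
The approach replaces the convexity of $f$, which Lemma~\ref{l:lb} used, with the convexity of the model $f_{PL,x}(\cdot)$ itself. The price is that the curvature bound must be applied at the full step $\alpha=1$ rather than at $\alpha$. This explains why the error term is $\frac12\cc_f$ instead of $\frac{\alpha}{2}\cc_f$.

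First, apply Definition~\ref{def:curvature} with $v=x^*$ and step $1$, so that $y=x^*$. This gives
\begin{equation*}
f(x^*) \ge f(x) + \Delta f(x;x^*-x) - \tfrac12 \cc_f .
\end{equation*}
Second, invoke convexity of the piecewise linearization in the form derived from (\ref{eq:cvx_abs}) with $z=x$, namely $\Delta f(x;\alpha\Delta x)\le \alpha\,\Delta f(x;\Delta x)$ for $\alpha\in[0,1]$. Taking $\Delta x = x^*-x$ and dividing by $\alpha$ gives
\begin{equation*}
\Delta f(x;x^*-x) \ge \frac{\Delta f(x;\alpha(x^*-x))}{\alpha}.
\end{equation*}
Combining the two displays yields
\begin{equation*}
f(x)-f(x^*) \le \frac{-\Delta f(x;\alpha(x^*-x))}{\alpha} + \tfrac12\cc_f \le g(x) + \tfrac12 \cc_f ,
\end{equation*}
where $g(x)$ is the gap from Definition~\ref{def:fw_gap} with step $\alpha$.

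Third, pass from $g(x)$ to the computable quantity at the approximate minimizer $v$, exactly as at the end of the proof of Lemma~\ref{l:lb}. The inexactness condition gives
\begin{equation*}
g(x)\le \frac{-\Delta f(x;\alpha(v-x))}{\alpha} + \frac{\epsilon_\alpha}{2\alpha}\cc_f .
\end{equation*}
With $\epsilon_\alpha=\epsilon\alpha^2$, the extra term equals $\frac{\alpha\epsilon}{2}\cc_f$. Adding this to the previous bound produces the claimed $\frac12(1+\alpha\epsilon)\cc_f$.

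The only delicate point is the second step: the scaling inequality $\Delta f(x;\alpha\Delta x)\le\alpha\Delta f(x;\Delta x)$. It requires convexity of $f_{PL,x}$ along the segment between $x$ and $x^*$, together with $\Delta f(x;0)=0$. Both hold, since $x,x^*\in C$ and the convexity is assumed at every $x\in C$. No convexity of $f$ itself is used anywhere in the argument.
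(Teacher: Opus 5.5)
Your proposal is correct and follows essentially the same route as the paper: the curvature bound at full step with $v=x^*$, the scaling inequality $\Delta f(x;\alpha(x^*-x))\le\alpha\,\Delta f(x;x^*-x)$ from convexity of the model, and the inexactness condition to pass to $v$. The only difference is cosmetic: you route the argument through $g(x)$, while the paper names an exact minimizer $w$ of the piecewise linear sub-problem.
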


\begin{proof}
    Let $w\in\argmin_{u\in C}\Delta f(x;\alpha(u-x))$.
    By optimality of $w$,
    \begin{align*}
        -\Delta f(x;\alpha(w-x)) \geq  -\Delta f(x;\alpha(x^*-x)).
    \end{align*}
    By convexity of the piecewise linearization (equivalently, convexity of $\Delta f(x;\cdot)$) and $\Delta f(x;0)=0$, one has
    \[
        \Delta f(x;\alpha(x^*-x)) \leq \alpha \Delta f(x;x^*-x),
    \]
    and hence
    \begin{align*}
        -\Delta f(x;\alpha(x^*-x))
        &\geq -\alpha \Delta f(x;x^*-x)
        = \alpha\big(f(x)-f_{PL,x}(x^*)\big).
    \end{align*}
    By Definition~\ref{def:curvature} with $v=x^*$ and $\alpha=1$, we have
    $\big|f(x^*)-f_{PL,x}(x^*)\big|\le \cc_f/2$, and thus
    $f(x)-f_{PL,x}(x^*) \ge f(x)-f(x^*)-\cc_f/2$. Combining the estimates gives
    \begin{equation*}
        -\Delta f(x;\alpha(w-x)) \geq \alpha\Big(f(x)-f(x^*)-\frac{\cc_f}{2}\Big).
    \end{equation*}
    Finally, since $v$ is an approximate minimizer of the inner problem, one has
    \[
        \Delta f(x;\alpha(v-x)) \le \Delta f(x;\alpha(w-x)) + \frac{1}{2}\epsilon_\alpha \cc_f,
    \]
    i.e.,
    \[
        -\Delta f(x;\alpha(v-x)) \ge -\Delta f(x;\alpha(w-x)) - \frac{1}{2}\epsilon_\alpha \cc_f.
    \]
    Dividing by $\alpha$ and using $\epsilon_\alpha=\epsilon\alpha^2$ yields the claim.
\end{proof}

\begin{lemma}(Primal progress)\label{rmk:primal_prog}
    Let $f\in \cc^d_{\abs}(\rr^n)$ be a convex function over the compact
    convex set $C\subseteq \rr^n$, $\alpha_t \in (0,1]$ and $x_t \in C$. 
    Then for a step $x_{t+1} = x_t + \alpha_t(v_t-x_t)$, it holds that
    \begin{equation*}
        f(x_{t+1}) \leq f(x_t) -\alpha_t g(x_t) + \frac{\alpha_t^2}{2}\cc_f (1+\epsilon)\;,
    \end{equation*}
    if $v_t$ is an appropriate piecewise linear minimizer, i.e.,
    \begin{equation*}
        \Delta f(x_t;\alpha_t(v_t-x_t)) \leq \min_{w\in C}\Delta f(x_t;\alpha_t(w-x_t)) 
    + \frac{1}{2}\epsilon_t \cc_f,
    \end{equation*}
    with $\epsilon_t = \epsilon \alpha_t^2$.
\end{lemma}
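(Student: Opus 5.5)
The plan is to combine the curvature bound of Definition~\ref{def:curvature} with the approximate-minimality condition on $v_t$, in the same way as the second half of the proof of Lemma~\ref{l:lb}.

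First, apply Definition~\ref{def:curvature} with $x = x_t$, $v = v_t$, $\alpha = \alpha_t$, so that $y = x_{t+1}$. Using only the one-sided part of the absolute value, this gives
\begin{equation*}
    f(x_{t+1}) \le f(x_t) + \Delta f(x_t;\alpha_t(v_t-x_t)) + \frac{\alpha_t^2}{2}\cc_f .
\end{equation*}
This step uses neither convexity nor any property of $v_t$ beyond $v_t\in C$.

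Second, bound the abs-linearization term using the inexactness condition with $\epsilon_t=\epsilon\alpha_t^2$:
\begin{equation*}
    \Delta f(x_t;\alpha_t(v_t-x_t)) \le \min_{w\in C}\Delta f(x_t;\alpha_t(w-x_t)) + \frac{\epsilon\alpha_t^2}{2}\cc_f = -\alpha_t g(x_t) + \frac{\epsilon\alpha_t^2}{2}\cc_f .
\end{equation*}
The identity on the right is just Definition~\ref{def:fw_gap} with step-size $\alpha_t$: $\alpha_t g(x_t) = -\min_{w\in C}\Delta f(x_t;\alpha_t(w-x_t))$.

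Substituting the second display into the first gives $f(x_{t+1}) \le f(x_t) - \alpha_t g(x_t) + \frac{\alpha_t^2}{2}\cc_f(1+\epsilon)$, which is the claim.

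There is no real obstacle. The one point needing care is that the gap $g(x_t)$ must be the one defined with the same step-size $\alpha_t$ used in the update, so that the minimum in the inexactness condition matches $-\alpha_t g(x_t)$ exactly. Convexity of $f$ is not needed for this inequality; it is only used later, when the bound is combined with Lemma~\ref{l:lb}.
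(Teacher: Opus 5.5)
Your proof is correct and follows essentially the same argument as the paper: the one-sided curvature bound at $y=x_{t+1}$, then the inexactness condition together with the identity $\min_{w\in C}\Delta f(x_t;\alpha_t(w-x_t)) = -\alpha_t g(x_t)$. Your remark that convexity of $f$ is not needed is also accurate, since the paper's proof never actually uses it either.
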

\begin{proof}
    Let $x = x_t$ and $y = x_{t+1} = x_t + \alpha_t (v_t-x_t)$.
    From the definition of our curvature constant $\cc_f$ for 
    the convex abs-smooth function $f$, we have
    \begin{equation*}
        f(y) = f(x_t + \alpha_t(v_t-x_t)) \leq f(x_t) + \Delta f(x_t; y-x_t) + \frac{\alpha_t^2}{2}\cc_f\;.
    \end{equation*}
    Now we use that $v_t$ is a good descent direction on the piecewise linearization of $f$ at $x_t$.
    Formally, the point $v_t$ satisfies
    \begin{align*}
        \Delta f(x_t;\alpha_t(v_t-x_t)) \leq \min_{w\in C}\Delta f(x_t;\alpha_t(w-x_t)) + \frac{1}{2}\epsilon_t \cc_f  = - \alpha_t g(x_t) + \frac{1}{2}\epsilon_t \cc_f.
    \end{align*}
    Here we have plugged in the definition (\ref{def:fw_gap}) of the dual gap $g(x_t)$. Therefore, from the
    definition of $\epsilon_t = \epsilon \alpha_t^2$ we obtain 
    \begin{equation*}
        f(x_{t+1}) \leq f(x_t) -\alpha_t g(x_t) + \frac{\alpha_t^2}{2}\cc_f (1+\epsilon),
    \end{equation*}
    proving the lemma.
\end{proof}
Equivalently, one can rewrite the primal progress of Lemma~\ref{rmk:primal_prog} in terms of the primal gap
at each iteration, i.e.,  one has
\begin{equation}\label{eq:pp_relaxed}
     h_{t+1}\leq h_t - \alpha_t g_t + \frac{\alpha_t^2}{2} \cc_f (1+\epsilon).
\end{equation}
This is analogous to the primal progress equation in the smooth case \cite{Ja13}. 
Since the ASFW with open loop step-sizes is not a descent method, we need a different technique
compared to the convergence proofs with a line search or short step step-size rules.
Therefore, we use a method called \emph{Approximate Duality Gap Technique} (ADGT) \cite{ADGT} to show our
next theorem, see also \cite{MaPo25}. 
The main intuition of this method is to create an upper estimate $G_t$ of the optimality gap
$f(x_{t+1}) - f(x^*)$, with $x_{t+1}$ as the output of Algorithm~\ref{alg:asfw_hbasfw} at time $t$.
This estimate corresponds to the difference between an upper bound on
$f(x_{t+1})$, $U_t\geq f(x_{t+1})$ and a lower bound on $f(x^*)$, $L_t\leq f(x^*)$, so that 
\begin{equation}\label{eq:gt}
    f(x_{t+1}) - f(x^*) \leq U_t-L_t = G_t.
\end{equation}
This estimate $G_t$ is referred to as the \emph{approximate duality gap}. 
In the ADGT framework, the main task is to derive a one-step inequality of the form
\begin{equation}\label{eq:adgt_step}
    A_tG_t - A_{t-1}G_{t-1} \leq E_t,
\end{equation}
for a suitable error sequence $(E_t)_{t\geq 0}$.
One then telescopes~(\ref{eq:adgt_step}) and divides by $A_t$.
The simplest choice of an upper bound and lower bound are $f(x_{t+1})$ and $f(x^*)$ respectively.
By choosing the lower bound as $f(x^*)$, it is unclear how one can control the left-hand side of~(\ref{eq:adgt_step}) without the knowledge of $x^*$. 
Conversely, by convexity of $f$ and Lemma~\ref{l:lb}, one obtains a lower bound that allows
evaluating $A_tL_t$ without the knowledge of $x^*$.  
The power and generality of the ADGT is illustrated in \cite{ADGT} for various first-order methods.
The primal-dual convergence in the smooth case is analyzed in a similar manner in \cite{MaPo25}.

\begin{lemma}(ADGT telescoping)\label{lem:adgt_telescoping}
Let $(A_t)_{t\geq 0}$ be a sequence of positive numbers and $(G_t)_{t\geq 0}$ satisfy
\[
    f(x_{t+1}) - f(x^*) \leq G_t
\]
for all $t\geq 0$.
If
\[
    A_tG_t - A_{t-1}G_{t-1} \leq E_t,\qquad t\geq 0,
\]
with the convention $A_{-1}G_{-1}:=0$, then
\[
    f(x_{t+1}) - f(x^*) \leq G_t \leq \frac{1}{A_t}\sum_{i=0}^t E_i.
\]
\end{lemma}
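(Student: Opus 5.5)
The plan is to sum the one-step inequalities and let the sum collapse. For each $i\in\{0,\dots,t\}$ we have $A_iG_i - A_{i-1}G_{i-1} \le E_i$, with the convention $A_{-1}G_{-1}:=0$. Summing these $t+1$ inequalities over $i=0,\dots,t$ cancels every intermediate term $A_iG_i$ for $0\le i\le t-1$. What remains is
\[
A_tG_t - A_{-1}G_{-1} = A_tG_t \le \sum_{i=0}^t E_i.
\]

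Next we divide by $A_t$. Since the sequence $(A_t)$ consists of positive numbers, dividing by $A_t>0$ preserves the inequality, so
\[
G_t \le \frac{1}{A_t}\sum_{i=0}^t E_i.
\]
The first inequality, $f(x_{t+1})-f(x^*)\le G_t$, is an assumption of the lemma. Chaining the two inequalities gives the claim.

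There is no real obstacle here. The only points that need care are the boundary convention at $i=0$, which makes the first summand simply $A_0G_0\le E_0$, and the positivity of $A_t$, which justifies the division. In particular, the argument never uses the sign of $G_t$ or of $E_t$. If one wants to be fully rigorous, the telescoping step can be phrased as a short induction on $t$ showing $A_tG_t\le\sum_{i=0}^tE_i$:
\begin{itemize}
\item the base case $t=0$ is exactly the hypothesis at $t=0$;
\item the inductive step adds the hypothesis $A_tG_t\le A_{t-1}G_{t-1}+E_t$ to the induction hypothesis at $t-1$.
\end{itemize}
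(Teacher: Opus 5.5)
Your proposal is correct and follows the paper's proof: sum the one-step inequalities over $i=0,\dots,t$, use the convention $A_{-1}G_{-1}=0$ so the sum telescopes to $A_tG_t\le\sum_{i=0}^t E_i$, then divide by $A_t>0$. The first inequality, $f(x_{t+1})-f(x^*)\le G_t$, is the lemma's standing hypothesis and needs no separate argument, as you note.
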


\begin{proof}
For any $t\geq 0$, summing the one-step inequality from $i=0$ to $t$ yields
\[
    A_tG_t
    = \sum_{i=0}^t \big(A_iG_i-A_{i-1}G_{i-1}\big)
    \leq \sum_{i=0}^t E_i.
\]
Dividing by $A_t$ gives the claim.
\end{proof}

For clarity, we present a more explicit version of Algorithm~\ref{alg:asfw_hbasfw}
as Algorithm~\ref{alg:revised_asfw_hbasfw}.

\begin{algorithm}[t]
\caption{Revised ASFW and HB-ASFW algorithms}
\label{alg:revised_asfw_hbasfw}  
\begin{algorithmic}[1]
\REQUIRE Point $x_0\in C$, abs-smooth function $f$, weights $\alpha_t,\, a_t$ and $\epsilon \ge 0$.
\FOR{$t=0$ \textbf{to} $\dotsc$}
\STATE $a_t \longleftarrow 2t+2,\, A_t \longleftarrow \sum\limits_{i=0}^t a_i = (t+1)(t+2)$ and $\alpha_t = a_t/A_t$  
\STATE \textcolor{gray}{$\diamond$ Choose either \ref{a:s2} (relaxed-ASFW) or \ref{a:hb_s2} (HB-ASFW) for the entire run}
\STATE \textbf{(relaxed-ASFW)} Choose $v_t\in C$ such that $\Delta f(x_t;\alpha_t(v_t-x_t)) \leq \min_{w\in C}\Delta f(x_t;\alpha_t(w-x_t)) + \frac{1}{2}\epsilon\alpha_t^2 \cc_f$
\STATE \textbf{(HB-ASFW)} Choose $v_t\in C$ such that $\sum\limits_{i=0}^{t} a_i \frac{\Delta f(x_i;\alpha_i(v_t-x_i))}{\alpha_i} \leq \min_{v\in C}\sum\limits_{i=0}^{t} a_i \frac{\Delta f(x_i;\alpha_i(v-x_i))}{\alpha_i} + \epsilon a_t\alpha_t \cc_f$
\STATE $x_{t+1} \longleftarrow \frac{A_{t-1}}{A_t}x_t+\frac{a_t}{A_t}v_t$ 
\ENDFOR
\end{algorithmic}
\end{algorithm}

\begin{theorem}\label{thm:p-d}(Primal-dual convergence for relaxed vanilla ASFW)
     Let $f\in\cc_{\abs}^d(\rr^n)$ be a convex function over a compact convex set $C\subseteq\rr^n$ with 
     diameter $D$. For each $t\geq 0$, the iterates $x_t$ of the relaxed ASFW 
     Algorithm~\ref{alg:revised_asfw_hbasfw}
     satisfy 
 \begin{equation*}
   f(x_{t+1}) -f(x^*)\leq  G_t \leq \frac{4 \cc_f}{t+2}(1+\epsilon),
 \end{equation*}
where $x^*\in C$ is an optimal solution to problem (\ref{eq:p}),
$\cc_f$ the curvature constant of $f$, 
with $G_t = U_t-L_t$ for $U_t = f(x_{t+1})$ and
\begin{equation*}
   L_t =  \frac{1}{A_t} \bigg( \sum\limits_{i=0}^t a_if(x_i) +\sum\limits_{i=0}^t a_i\bigg[ \frac{\Delta f(x_i;\alpha_i(v_i-x_i))}{\alpha_i} - \frac{\alpha_i}{2} \cc_f(1+\epsilon)\bigg] \bigg).
\end{equation*}
\end{theorem}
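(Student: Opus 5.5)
We follow the ADGT scheme of Lemma~\ref{lem:adgt_telescoping} with $a_t = 2t+2$, $A_t=(t+1)(t+2)$ and $\alpha_t=a_t/A_t = 2/(t+2)$. The argument has three parts: check that $G_t$ is a valid upper estimate, derive a one-step inequality with an explicit error $E_t$, and sum.

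\emph{Validity of the lower bound.} We first check that $L_t\le f(x^*)$, so that $f(x_{t+1})-f(x^*)\le G_t$. Apply Lemma~\ref{l:lb} at each $x_i$ with $\alpha=\alpha_i$ and $v=v_i$. The relaxed step of Algorithm~\ref{alg:revised_asfw_hbasfw} is exactly the approximate-minimizer condition with $\epsilon_{\alpha_i}=\epsilon\alpha_i^2$. This gives
\[
f(x^*)\ \ge\ f(x_i)+\frac{\Delta f(x_i;\alpha_i(v_i-x_i))}{\alpha_i}-\frac{\alpha_i}{2}\cc_f(1+\epsilon)
\]
for every $i$. Multiplying by $a_i>0$, summing, and dividing by $A_t=\sum_i a_i$ yields $L_t\le f(x^*)$.

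\emph{One-step inequality.} We compute $A_tG_t-A_{t-1}G_{t-1}$ with $A_{-1}=0$. The upper-bound part contributes $A_tf(x_{t+1})-A_{t-1}f(x_t)$. The lower-bound part contributes
\[
-a_tf(x_t)-a_t\frac{\Delta f(x_t;\alpha_t(v_t-x_t))}{\alpha_t}+\frac{a_t\alpha_t}{2}\cc_f(1+\epsilon).
\]
Adding these, the terms $-A_{t-1}f(x_t)-a_tf(x_t)$ combine to $-A_tf(x_t)$, so
\[
A_tG_t-A_{t-1}G_{t-1}= A_t\Big(f(x_{t+1})-f(x_t)-\Delta f(x_t;\alpha_t(v_t-x_t))\Big)+\frac{a_t\alpha_t}{2}\cc_f(1+\epsilon),
\]
where we used $a_t/\alpha_t=A_t$. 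Since $x_{t+1}-x_t=\alpha_t(v_t-x_t)$, the curvature bound of Definition~\ref{def:curvature} controls the bracket by $\frac{\alpha_t^2}{2}\cc_f$. Note that we do not need Lemma~\ref{rmk:primal_prog} here: the $\Delta f$ term cancels exactly. We therefore get
\[
E_t=\frac{A_t\alpha_t^2}{2}\cc_f+\frac{a_t\alpha_t}{2}\cc_f(1+\epsilon)\le a_t\alpha_t\cc_f(1+\epsilon),
\]
using $A_t\alpha_t^2=a_t\alpha_t$.

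\emph{Summation.} We have $a_i\alpha_i=\frac{4(i+1)}{i+2}\le 4$, so $\sum_{i=0}^tE_i\le 4(t+1)\cc_f(1+\epsilon)$. Dividing by $A_t=(t+1)(t+2)$ via Lemma~\ref{lem:adgt_telescoping} gives $G_t\le \frac{4\cc_f}{t+2}(1+\epsilon)$.

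The main obstacle is the bookkeeping in the one-step difference. The cancellation of $f(x_t)$ and of the $\Delta f$ term relies on the identities $a_t/\alpha_t=A_t$ and $A_t=A_{t-1}+a_t$, and these must be tracked carefully. The base case $t=0$ is covered by the convention $A_{-1}G_{-1}=0$.
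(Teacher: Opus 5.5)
Your proposal is correct and follows the paper's proof essentially step for step: the same lower bound from Lemma~\ref{l:lb}, the same one-step ADGT difference in which the model term cancels via $\alpha_t=a_t/A_t$, and the same summation using $a_t\alpha_t=a_t^2/A_t\le 4$. The only cosmetic difference is that you apply the curvature bound without the unnecessary factor $(1+\epsilon)$ on the $A_t\alpha_t^2/2$ term. This gives the slightly sharper intermediate error $\frac{a_t\alpha_t}{2}\cc_f(2+\epsilon)$, which you then relax to the paper's $E_t=\frac{a_t^2}{A_t}\cc_f(1+\epsilon)$.
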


\begin{proof}
Let $a_t \geq 0$ be numbers which will be determined later, and $A_t = \sum\limits_{i=0}^t a_i$. Set $A_{-1} = 0$.
The lower bound $L_t$ is obtained from the lower bound of $f(x^*)$ 
from Lemma \ref{l:lb}, 
\begin{equation*}
    f(x^*)  \geq f(x_t) + \frac{\Delta f(x_t;\alpha_t(v_t-x_t))}{\alpha_t} - \frac{\alpha_t}{2}\cc_f (1+\epsilon)
\end{equation*} 
and therefore also
\begin{align*}
     A_t f(x^*) & \geq \sum\limits_{i=0}^t a_if(x_i) +\sum\limits_{i=0}^t a_i\bigg[ \frac{\Delta f(x_i;\alpha_i(v_i-x_i))}{\alpha_i} - \frac{\alpha_i}{2} \cc_f(1+\epsilon)\bigg] =: A_t L_t.
\end{align*}
From this inequality, one obtains the lower bound $L_t$ as
\begin{equation}\label{eq:L_t_vanilla}
   L_t =  \frac{1}{A_t} \bigg( \sum\limits_{i=0}^t a_if(x_i) +\sum\limits_{i=0}^t a_i\bigg[ \frac{\Delta f(x_i;\alpha_i(v_i-x_i))}{\alpha_i} - \frac{\alpha_i}{2} \cc_f(1+\epsilon)\bigg] \bigg).
\end{equation}
Then, we define the gap $G_t := f(x_{t+1}) - L_t \geq 0$. The upper bound $f(x_{t+1})$ is one step ahead, which helps with the analysis.
For $t\geq 0$, one has
\begin{align*}
    A_tG_t - A_{t-1}G_{t-1} &= A_tf(x_{t+1}) -A_{t-1}f(x_t) + A_{t-1}L_{t-1}-A_tL_t\\
    & = A_tf(x_{t+1}) -A_{t-1}f(x_t) - a_tf(x_t) - a_t\bigg[ \frac{\Delta f(x_t;\alpha_t(v_t-x_t))}{\alpha_t}
    - \frac{\alpha_t}{2} \cc_f(1+\epsilon)\bigg]\\
    & = A_t(f(x_{t+1}) -f(x_t)) - a_t\bigg[ \frac{\Delta f(x_t;\alpha_t(v_t-x_t))}{\alpha_t} 
    - \frac{\alpha_t}{2} \cc_f(1+\epsilon)\bigg]\\
    & \leq A_t\big(\Delta f(x_t;\alpha_t(v_t-x_t)) + \frac{\alpha_t^2}{2} \cc_f(1+\epsilon)\big)
    - a_t\bigg[ \frac{\Delta f(x_t;\alpha_t(v_t-x_t))}{\alpha_t} - \frac{\alpha_t}{2} \cc_f(1+\epsilon)\bigg].
\end{align*}
To cancel the model term, we choose the step-size so that
\begin{equation*}
    A_t\Delta f(x_t;\alpha_t(v_t-x_t)) = a_t \frac{\Delta f(x_t;\alpha_t(v_t-x_t))}{\alpha_t}\;.
\end{equation*}
This is equivalent to $\alpha_t = a_t/A_t$.
Substituting this identity back into the previous estimate yields
\begin{equation*}
    A_tG_t - A_{t-1}G_{t-1} 
    \leq A_t \frac{a_t^2}{2A_t^2}\cc_f(1+\epsilon) + \frac{a_t^2}{2A_t}\cc_f(1+\epsilon) 
    = \frac{a_t^2}{A_t}\cc_f(1+\epsilon) =: E_t.
\end{equation*}
Let us choose $a_t = 2(t+1)$ which gives 
$A_t = \sum\limits_{i=0}^t a_i = (t+1)(t+2)$ as well as
\begin{equation*}
    \frac{a_t^2}{A_t} = \frac{2^2(t+1)^2}{(t+1)(t+2)}\leq 4.
\end{equation*}
Lemma~\ref{lem:adgt_telescoping} therefore gives
\begin{equation*}
G_t\leq \frac{1}{A_t}\sum\limits_{i=0}^t E_i 
\leq \frac{1}{A_t}\sum\limits_{i=0}^t 4\cc_f(1+\epsilon) 
= \frac{1}{A_t}(t+1)4\cc_f(1+\epsilon) 
= \frac{4\cc_f}{t+2}(1+\epsilon).
\end{equation*}
Hence, $f(x_{t+1})-f(x^*) \leq G_t \leq \frac{4\cc_f}{t+2}(1+\epsilon)$. 
This completes the proof.
\end{proof}
Thus, we obtain a convergence rate for the primal-dual gap that matches the
smooth setting \cite{CGFWSurvey2022,Ja13}.
The intuition behind this is that locally our piecewise linear model is a good approximation for $f$. 
In our setting, Theorem~\ref{thm:approx} substitutes for the L-smoothness assumption used in the smooth case.
In applications, one can obtain problem-dependent upper bounds on the constant $\gamma$ in Theorem~\ref{thm:approx}
from the evaluation procedure associated with an abs-smooth representation, see \cite{Gr13};
informally, $\gamma$ plays a role analogous to a gradient-Lipschitz constant in the smooth setting.
We cannot say anything about the convergence of the iterates generated by 
Algorithm~\ref{alg:revised_asfw_hbasfw},
in fact this is not even guaranteed in the smooth case, see \cite{bolte2023iterates}.

We next turn to the heavy ball variant of Algorithm~\ref{alg:revised_asfw_hbasfw}.
The ADGT argument is the same as in Theorem~\ref{thm:p-d}; the only change is that the lower bound is built from the aggregated model
\[
    \Phi_t(v) := \sum\limits_{i=0}^{t} a_i \frac{\Delta f(x_i;\alpha_i(v-x_i))}{\alpha_i},
\]
rather than from the single most recent abs-linearization.
Thus the proof uses the same ADGT template, but we keep a separate corollary because the lower bound $L_t$ now depends on $\Phi_t$.
This also shows that the heavy ball sub-problem may be solved inexactly.

\begin{corollary}\label{cor:pd_hb}(Primal-dual convergence for heavy ball ASFW)
Let $f\in\cc_{\abs}^d(\rr^n)$ be a convex function over a compact convex set $C\subseteq\rr^n$ with diameter $D$. 
For each $t\geq 0$, assume that the heavy ball iterate $v_t$ satisfies
\[
    \Phi_t(v_t) \leq \min_{v\in C}\Phi_t(v) + \eta_t,
    \qquad
    \eta_t := \epsilon a_t\alpha_t \cc_f = \epsilon \frac{a_t^2}{A_t}\cc_f.
\]
Then the iterates $x_t$ of the heavy ball ASFW Algorithm~\ref{alg:revised_asfw_hbasfw} satisfy 
\begin{equation*}
   f(x_{t+1}) -f(x^*)\leq  G_t \leq \frac{4\cc_f}{t+2}(1+\epsilon),
\end{equation*}
where $x^*\in C$ is an optimal solution to problem (\ref{eq:p}),
$\cc_f$ is the curvature constant of $f$, and
$G_t = U_t -L_t$ for $U_t = f(x_{t+1})$ and 
\begin{equation*}
    L_t = \frac{1}{A_t} \bigg( \sum\limits_{i=0}^t a_if(x_i) +\sum\limits_{i=0}^t a_i\bigg[ \frac{\Delta f(x_i;\alpha_i(v_t-x_i))}{\alpha_i} - \frac{ \alpha_i}{2} \cc_f\bigg] - \eta_t \bigg)\;.
\end{equation*}
\end{corollary}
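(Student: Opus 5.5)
We follow the ADGT template of Theorem~\ref{thm:p-d} verbatim, with the single-point lower bound replaced by the aggregated model $\Phi_t$.

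\textbf{Step 1: $L_t$ is a valid lower bound.} Applying Remark~\ref{rmk:lb} at each $x_i$ with step $\alpha_i$ gives
\[
f(x^*)\ge f(x_i)+\frac{\Delta f(x_i;\alpha_i(x^*-x_i))}{\alpha_i}-\frac{\alpha_i}{2}\cc_f .
\]
Multiply by $a_i$ and sum over $i=0,\dots,t$. Since the $x^*$-dependent terms sum to $\Phi_t(x^*)$, we use
\[
\Phi_t(x^*)\ge \min_{v\in C}\Phi_t(v)\ge \Phi_t(v_t)-\eta_t .
\]
This yields $A_t f(x^*)\ge A_tL_t$, and hence $f(x_{t+1})-f(x^*)\le G_t$. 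Note that here the curvature term carries no factor $(1+\epsilon)$; the inexactness is absorbed by $-\eta_t$.

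\textbf{Step 2: one-step inequality.} We compute $A_tG_t-A_{t-1}G_{t-1}$. Unlike the vanilla case, $A_tL_t-A_{t-1}L_{t-1}$ is not a single new term, because the entire sum is re-evaluated at $v_t$ instead of $v_{t-1}$. Write
\[
\Phi_t(v_t)=\Phi_{t-1}(v_t)+a_t\frac{\Delta f(x_t;\alpha_t(v_t-x_t))}{\alpha_t}.
\]
By approximate optimality of $v_{t-1}$ for $\Phi_{t-1}$,
\[
\Phi_{t-1}(v_t)\ge \min_{v\in C}\Phi_{t-1}(v)\ge\Phi_{t-1}(v_{t-1})-\eta_{t-1}.
\]
Consequently
\[
A_tL_t-A_{t-1}L_{t-1}\ge a_tf(x_t)+a_t\frac{\Delta f(x_t;\alpha_t(v_t-x_t))}{\alpha_t}-\frac{a_t\alpha_t}{2}\cc_f-\eta_t .
\]
The $\pm\eta_{t-1}$ terms cancel: the $+\eta_{t-1}$ contained in $-A_{t-1}L_{t-1}$ is offset by the $-\eta_{t-1}$ from the optimality inequality.

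On the upper-bound side, we use the curvature bound (Definition~\ref{def:curvature}) with $x_{t+1}=x_t+\alpha_t(v_t-x_t)$:
\[
A_t\big(f(x_{t+1})-f(x_t)\big)\le A_t\Delta f(x_t;\alpha_t(v_t-x_t))+A_t\frac{\alpha_t^2}{2}\cc_f .
\]
Together with $A_{t-1}+a_t=A_t$ and $\alpha_t=a_t/A_t$, the model terms cancel exactly as in Theorem~\ref{thm:p-d}. This leaves
\[
A_tG_t-A_{t-1}G_{t-1}\le \frac{a_t^2}{A_t}\cc_f+\eta_t=\frac{a_t^2}{A_t}\cc_f(1+\epsilon)=:E_t .
\]
For $t=0$ the same computation holds with $A_{-1}G_{-1}=0$, and no optimality comparison is needed.

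\textbf{Step 3: telescoping.} We invoke Lemma~\ref{lem:adgt_telescoping} with $a_t=2(t+1)$ and $A_t=(t+1)(t+2)$. Since $a_t^2/A_t\le 4$, this gives $G_t\le 4\cc_f(1+\epsilon)/(t+2)$.

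\textbf{Main obstacle.} The delicate point is Step 2: correctly handling the shift of the evaluation point from $v_{t-1}$ to $v_t$ inside the aggregated sum, including the bookkeeping of the sign of the $\eta$-terms in $L_t$. The argument needs both that $v_{t-1}$ is an $\eta_{t-1}$-minimizer of $\Phi_{t-1}$ and that the $\eta_{t-1}$ appearing in $L_{t-1}$ exactly offsets the slack from that comparison. If a residual $\eta_{t-1}$ survived, the error $E_t$ would still be $\cO(\epsilon\cc_f)$ and the rate unchanged up to constants. I would therefore write $L_t$ carefully in the form $\frac{1}{A_t}\big(\sum_i a_i f(x_i)+\Phi_t(v_t)-\eta_t-\sum_i \frac{a_i\alpha_i}{2}\cc_f\big)$ before differencing.
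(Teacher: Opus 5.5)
Your proposal is correct and follows the paper's proof essentially line for line. The steps are the same: the lower bound from Remark~\ref{rmk:lb} combined with the inexact aggregate oracle, then the split $\Phi_t(v_t)=\Phi_{t-1}(v_t)+a_t\Delta f(x_t;\alpha_t(v_t-x_t))/\alpha_t$ together with the $\eta_{t-1}$-optimality of $v_{t-1}$ (so the $\eta_{t-1}$ terms cancel), the curvature bound with $\alpha_t=a_t/A_t$, and telescoping via Lemma~\ref{lem:adgt_telescoping} with $a_t=2(t+1)$. The only cosmetic difference is that you bound the increment $A_tL_t-A_{t-1}L_{t-1}$ from below instead of expanding $A_tG_t-A_{t-1}G_{t-1}$ directly, which is the same computation.
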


\begin{proof}
Let $a_t \geq 0$ be the weights from Algorithm~\ref{alg:revised_asfw_hbasfw},
$A_t = \sum\limits_{i=0}^t a_i$, and define $\Phi_t$ as above.
By Remark~\ref{rmk:lb},
\begin{equation*}
    f(x^*) \geq f(x_i) + \frac{\Delta f(x_i;\alpha_i(x^*-x_i))}{\alpha_i} - \frac{\alpha_i}{2}\cc_f
\end{equation*}
for each $i=0,\dots,t$.
Multiplying by $a_i$ and summing gives
\begin{align*}
    A_t f(x^*)
    &\geq \sum\limits_{i=0}^t a_if(x_i)
    + \sum\limits_{i=0}^t a_i\bigg[\frac{\Delta f(x_i;\alpha_i(x^*-x_i))}{\alpha_i}
    - \frac{\alpha_i}{2}\cc_f\bigg]\\
    &\geq \sum\limits_{i=0}^t a_if(x_i)
    + \sum\limits_{i=0}^t a_i\bigg[\frac{\Delta f(x_i;\alpha_i(v_t-x_i))}{\alpha_i}
    - \frac{\alpha_i}{2}\cc_f\bigg] - \eta_t
    =: A_tL_t,
\end{align*}
where the second inequality uses the inexact aggregate oracle condition for $\Phi_t(v_t)$.
Hence,
\begin{equation}\label{eq:Lt_hbasfw}
    L_t = \frac{1}{A_t} \bigg( \sum\limits_{i=0}^t a_if(x_i) +\sum\limits_{i=0}^t a_i\bigg[ \frac{\Delta f(x_i;\alpha_i(v_t-x_i))}{\alpha_i} - \frac{ \alpha_i}{2} \cc_f\bigg] - \eta_t \bigg).
\end{equation}
Let $G_t := f(x_{t+1}) - L_t \geq 0$, and use the conventions $A_{-1}=0$, $\Phi_{-1}\equiv 0$, and $\eta_{-1}=0$.
Then
\begin{align*}
    A_tG_t - A_{t-1}G_{t-1}
    &= A_tf(x_{t+1}) - A_{t-1}f(x_t) + A_{t-1}L_{t-1} - A_tL_t\\
    &= A_t(f(x_{t+1})-f(x_t))
    + \Phi_{t-1}(v_{t-1}) - \Phi_t(v_t)
    + \frac{a_t\alpha_t}{2}\cc_f + \eta_t - \eta_{t-1}.
\end{align*}
Since
\[
    \Phi_t(v_t) = \Phi_{t-1}(v_t) + a_t\frac{\Delta f(x_t;\alpha_t(v_t-x_t))}{\alpha_t}
\]
and $v_{t-1}$ is an $\eta_{t-1}$-approximate minimizer of $\Phi_{t-1}$, we have
\[
    \Phi_{t-1}(v_{t-1}) \leq \Phi_{t-1}(v_t) + \eta_{t-1}.
\]
Therefore,
\begin{align*}
    A_tG_t - A_{t-1}G_{t-1}
    &\leq A_t(f(x_{t+1})-f(x_t))
    - a_t\frac{\Delta f(x_t;\alpha_t(v_t-x_t))}{\alpha_t}
    + \frac{a_t\alpha_t}{2}\cc_f + \eta_t.
\end{align*}
By Definition~\ref{def:curvature},
\[
    f(x_{t+1})-f(x_t)
    \leq \Delta f(x_t;\alpha_t(v_t-x_t)) + \frac{\alpha_t^2}{2}\cc_f.
\]
Substituting this estimate and using $\alpha_t = a_t/A_t$ yields
\begin{equation*}
    A_tG_t - A_{t-1}G_{t-1}
    \leq A_t\frac{\alpha_t^2}{2}\cc_f + \frac{a_t\alpha_t}{2}\cc_f + \eta_t
    = \frac{a_t^2}{A_t}\cc_f + \eta_t
    \leq \frac{a_t^2}{A_t}\cc_f(1+\epsilon)
    =: E_t.
\end{equation*}
Choosing $a_t = 2(t+1)$ gives $A_t = (t+1)(t+2)$ and $\frac{a_t^2}{A_t}\leq 4$.
Lemma~\ref{lem:adgt_telescoping} therefore gives
\begin{equation*}
    G_t \leq \frac{1}{A_t}\sum\limits_{i=0}^t E_i
    \leq \frac{1}{A_t}\sum\limits_{i=0}^t 4\cc_f(1+\epsilon)
    = \frac{4\cc_f}{t+2}(1+\epsilon),
\end{equation*}
which proves the claim.
\end{proof}
The heavy ball ASFW method therefore attains the same $\cO(1/t)$ primal-dual rate as the
relaxed vanilla variant presented in Algorithm~\ref{alg:revised_asfw_hbasfw},
and the exact heavy ball method is recovered by setting $\epsilon = 0$. 
In the smooth setting, the heavy ball Frank-Wolfe method has been shown
to admit a tighter primal-dual bound in computations \cite{HB-fw,MaPo25}.
A key distinction from the smooth setting arises in the computation of the
abs-linearization at each iteration: the points $v_t$ are no longer restricted
to being vertices, if the feasible set $C$ is a polyhedron. 
Rather, they may reside at the intersection of a kink and the boundary of the feasible set $C$. 
The value $f(x_{t+1}) - f(x^*)$ cannot be computed in general since we
usually do not know the value of $f(x^*)$.
The primal-dual bound $G_t$, however, is computable and thus can be employed as a stopping criterion, provided we have a reasonable upper bound on $\cc_f$. 

Finally, we record a weaker robustness statement for the case in which the
piecewise linearization $f_{PL,x}(\cdot)$
is convex at every evaluation point $x$.
The distinction from Theorem~\ref{thm:p-d} and Corollary~\ref{cor:pd_hb} lies
in our formulation of the lower bound $L_t$,
which is given by Lemma~\ref{l:dual_bound_pc_lin}.

\begin{corollary}\label{thm:pd_cvx_pl}(Primal-dual convergence for the piecewise linear model)
    Let $f\in\cc_{\abs}^d(\rr^n)$ with convex piecewise linearization $f_{PL,x}(\cdot)$
    over a compact convex set $C\subseteq\rr^n$ with 
     diameter $D$. For each $t\geq 0$, the iterates $x_t$ of the relaxed ASFW 
     Algorithm~\ref{alg:revised_asfw_hbasfw}
     satisfy 
 \begin{equation*}
   f(x_{t+1}) -f(x^*)\leq  G_t \leq \frac{1}{2}\cc_f + \frac{2 \cc_f}{t+2}(1+2\epsilon),
 \end{equation*}
where $x^*\in C$ is an optimal solution to problem (\ref{eq:p}),
$\cc_f$ the curvature constant of $f$, 
with $G_t = U_t-L_t$ for $U_t = f(x_{t+1})$ and
\begin{equation*}
   L_t =  \frac{1}{A_t} \bigg( \sum\limits_{i=0}^t a_if(x_i) +\sum\limits_{i=0}^t a_i\bigg[ \frac{\Delta f(x_i;\alpha_i(v_i-x_i))}{\alpha_i} - \frac{1}{2}\cc_f - \frac{\alpha_i}{2}\epsilon \cc_f\bigg] \bigg).
\end{equation*}
\end{corollary}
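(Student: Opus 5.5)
We follow the ADGT template of Theorem~\ref{thm:p-d}, replacing the lower bound from Lemma~\ref{l:lb} with the one from Lemma~\ref{l:dual_bound_pc_lin}.

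\textbf{Lower bound.} Rearranging Lemma~\ref{l:dual_bound_pc_lin} at $x=x_i$, $\alpha=\alpha_i$, $v=v_i$ gives
\[
f(x^*)\ \ge\ f(x_i)+\frac{\Delta f(x_i;\alpha_i(v_i-x_i))}{\alpha_i}-\frac{1}{2}\cc_f-\frac{\alpha_i}{2}\epsilon\cc_f .
\]
Multiplying by $a_i\ge 0$ and summing over $i=0,\dots,t$ yields $A_t f(x^*)\ge A_tL_t$, with $L_t$ exactly as stated. With $G_t:=f(x_{t+1})-L_t$ we therefore have $f(x_{t+1})-f(x^*)\le G_t$. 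Only convexity of the piecewise linearizations is used here, not convexity of $f$.

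\textbf{One-step inequality.} Exactly as in the proof of Theorem~\ref{thm:p-d},
\[
A_tG_t-A_{t-1}G_{t-1}=A_t\bigl(f(x_{t+1})-f(x_t)\bigr)-a_t\Bigl[\frac{\Delta f(x_t;\alpha_t(v_t-x_t))}{\alpha_t}-\frac{1}{2}\cc_f-\frac{\alpha_t}{2}\epsilon\cc_f\Bigr].
\]
For the first term we use only the curvature bound of Definition~\ref{def:curvature}, which needs no convexity:
\[
f(x_{t+1})-f(x_t)\le \Delta f(x_t;\alpha_t(v_t-x_t))+\frac{\alpha_t^2}{2}\cc_f .
\]
We deliberately do not invoke Lemma~\ref{rmk:primal_prog}, since it assumes convex $f$. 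Because $\alpha_t=a_t/A_t$, the model terms cancel, leaving
\[
A_tG_t-A_{t-1}G_{t-1}\le \frac{a_t^2}{2A_t}\cc_f+\frac{a_t}{2}\cc_f+\frac{a_t^2}{2A_t}\epsilon\cc_f=:E_t .
\]

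\textbf{Telescoping.} Lemma~\ref{lem:adgt_telescoping} gives $G_t\le A_t^{-1}\sum_{i=0}^t E_i$. Each of the three pieces of $E_i$ is handled separately.
\begin{itemize}
\item The middle piece sums to $\frac{\cc_f}{2}A_t$, so after dividing by $A_t$ it contributes exactly the non-vanishing term $\frac12\cc_f$.
\item For the other two pieces, $a_i=2(i+1)$ and $A_i=(i+1)(i+2)$ give $a_i^2/A_i\le 4$. Hence $\sum_{i=0}^t \frac{a_i^2}{2A_i}\le 2(t+1)$, and dividing by $A_t$ gives $\frac{2\cc_f}{t+2}$ for the curvature piece and $\frac{2\epsilon\cc_f}{t+2}$ for the inexactness piece.
\end{itemize}
Altogether,
\[
G_t\le \tfrac12\cc_f+\frac{2\cc_f}{t+2}(1+\epsilon)\le \tfrac12\cc_f+\frac{2\cc_f}{t+2}(1+2\epsilon),
\]
which is the claimed bound.

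\textbf{Main obstacle.} The delicate point is the lower-bound step. One must check that Lemma~\ref{l:dual_bound_pc_lin} applies at every iterate with the iterate-dependent $\alpha_i$ and the inexactness level $\epsilon\alpha_i^2$ prescribed by the relaxed-ASFW rule in Algorithm~\ref{alg:revised_asfw_hbasfw}. The $\alpha$-independent $\frac12\cc_f$ loss in that lemma is what produces the non-vanishing term in the final bound.
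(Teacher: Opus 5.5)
Your proposal is correct and follows the paper's proof essentially step for step. The shared steps are the lower bound from Lemma~\ref{l:dual_bound_pc_lin}, the ADGT one-step identity, cancellation of the model term via $\alpha_t=a_t/A_t$, and telescoping with $a_t=2(t+1)$, where the $\frac{a_t}{2}\cc_f$ terms produce the floor $\frac12\cc_f$. The only difference is that you bound $f(x_{t+1})-f(x_t)$ by the plain curvature estimate $\frac{\alpha_t^2}{2}\cc_f$ instead of the paper's looser $\frac{\alpha_t^2}{2}\cc_f(1+\epsilon)$, which gives you the slightly sharper $\frac{2\cc_f}{t+2}(1+\epsilon)$ before relaxing to the stated $(1+2\epsilon)$.
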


\begin{proof}
    The proof follows similar lines as Theorem~\ref{thm:p-d},
    with a fundamental difference being in our formulation of $L_t$,
    which is obtained from
    Lemma~\ref{l:dual_bound_pc_lin}, i.e.,
    \begin{equation*}
        f(x^*) \geq f(x_t) + \frac{\Delta f(x_t;\alpha_t(v_t-x_t))}{\alpha_t} 
- \frac{1}{2}  \cc_f - \frac{\alpha_t}{2}\epsilon \cc_f.
    \end{equation*}
    Hence
    \begin{align*}
     A_t f(x^*) & \geq \sum\limits_{i=0}^t a_if(x_i) +\sum\limits_{i=0}^t a_i\bigg[ \frac{\Delta f(x_i;\alpha_i(v_i-x_i))}{\alpha_i} - \frac{1}{2}  \cc_f - \frac{\alpha_i}{2} \epsilon \cc_f\bigg] =: A_t L_t.
\end{align*}
From this inequality, one obtains the lower bound $L_t$ as
\begin{equation}\label{eq:L_t_pwlin}
   L_t =  \frac{1}{A_t} \bigg( \sum\limits_{i=0}^t a_if(x_i) +\sum\limits_{i=0}^t a_i\bigg[ \frac{\Delta f(x_i;\alpha_i(v_i-x_i))}{\alpha_i} - \frac{1}{2}  \cc_f - \frac{\alpha_i}{2} \epsilon \cc_f\bigg] \bigg).
\end{equation}
We define the gap $G_t:= f(x_{t+1}) - L_t \geq 0$. Then as before,
    \begin{align*}
    A_tG_t - A_{t-1}G_{t-1} &= A_tf(x_{t+1}) -A_{t-1}f(x_t) + A_{t-1}L_{t-1}-A_tL_t\\
    & = A_tf(x_{t+1}) -A_{t-1}f(x_t) - a_tf(x_t) - a_t\bigg[ \frac{\Delta f(x_t;\alpha_t(v_t-x_t))}{\alpha_t}
    - \frac{1}{2}  \cc_f - \frac{\alpha_t}{2} \epsilon \cc_f\bigg]\\
    & = A_t(f(x_{t+1}) -f(x_t)) - a_t\bigg[ \frac{\Delta f(x_t;\alpha_t(v_t-x_t))}{\alpha_t} 
    - \frac{1}{2}  \cc_f - \frac{\alpha_t}{2} \epsilon \cc_f\bigg]\\
    & \leq A_t\big(\Delta f(x_t;\alpha_t(v_t-x_t)) + \frac{\alpha_t^2}{2} \cc_f(1+\epsilon)\big)
    - a_t\bigg[ \frac{\Delta f(x_t;\alpha_t(v_t-x_t))}{\alpha_t} - \frac{1}{2}  \cc_f - \frac{\alpha_t}{2}
    \epsilon \cc_f\bigg].
\end{align*}
Again we choose $\alpha_t = a_t/A_t$ so that the model term cancels.
Substituting this identity back yields
\begin{equation*}
    A_tG_t - A_{t-1}G_{t-1} 
    \leq A_t \frac{a_t^2}{2A_t^2}\cc_f(1+\epsilon) + \frac{a_t}{2}\cc_f 
    + \frac{a_t^2}{2A_t} \epsilon\cc_f
    = \frac{a_t^2}{2A_t}\cc_f(1+2\epsilon) + \frac{a_t}{2}\cc_f =: E_t.
\end{equation*}
Choosing $a_t = 2(t+1)$ which gives 
$A_t = \sum\limits_{i=0}^t a_i = (t+1)(t+2)$ then results in 
\begin{equation*}
    \frac{a_t^2}{2A_t} = \frac{2^2(t+1)^2}{2(t+1)(t+2)}\leq 2.
\end{equation*}
Lemma~\ref{lem:adgt_telescoping} therefore gives
\begin{align*}
G_t &\leq \frac{1}{A_t}\sum\limits_{i=0}^t E_i 
\leq \frac{1}{A_t}\sum\limits_{i=0}^t \big(2\cc_f(1+2\epsilon) + (i+1)\cc_f \big)\\ 
& = \frac{1}{A_t}(t+1)2\cc_f(1+2\epsilon) + \frac{1}{A_t}  \frac{(t+1)(t+2)}{2}\cc_f
 = \frac{2}{t+2}\cc_f(1+2\epsilon) + \frac{\cc_f}{2},
\end{align*}
which completes the proof.
\end{proof}
\begin{remark}\label{rem:weak_pwlin}
The estimates in Lemma~\ref{l:dual_bound_pc_lin} and Corollary~\ref{thm:pd_cvx_pl} should be interpreted as robustness statements rather than sharp rates.
Since $f_{PL,x}(\cdot)$ need not be a supporting model of $f$, the pointwise upper bound on
$f(x)-f(x^*)$ already incurs the residual term $\cc_f/2$, and the ADGT convergence estimate inherits the same floor.
By Theorem~\ref{thm:approx} and $\cc_f\le 2\gamma D^2$, this floor is at most $\gamma D^2$.
Consequently, if $\cc_f$ is instantiated via a loose upper bound (for example $\cc_f := 2\gamma D^2$)
or if the diameter $D$ is large, then both the pointwise upper bound and the resulting convergence estimate may be quite weak, or even vacuous.
In the convex setting of Theorem~\ref{thm:p-d} and Corollary~\ref{cor:pd_hb}, by contrast, the floor disappears and one recovers a vanishing $\cO(1/t)$ bound.
\end{remark}
 
\section{Numerics}\label{sec:numerics}
For our numerical experiments we use \texttt{AbsSmoothFrankWolfe.jl},
an open-source implementation of the ASFW algorithm in \emph{Julia} \cite{julia}. 
This is part of the \texttt{FrankWolfe.jl} library which provides extensible 
implementations of the Frank-Wolfe algorithms with support for away steps,
line search strategies, and custom constraint oracles, see \cite{frankwolfe}.
The code terminates when the ASFW dual gap (\ref{def:fw_gap}) is 
less than a set tolerance.
When the feasible set $C$ is a polyhedron, we employ an adapted
version of the ASM as described above to solve our inner 
sub-problem \cite{KrPoWaWo23}, using \texttt{HiGHS.jl} as the inner solver.
We use \texttt{ADOLC.jl} as a \emph{Julia} wrapper of one 
of the most comprehensive
packages for algorithmic differentiation implemented
in C/C++, namely ADOL-C \cite{adolc} to
automatically generate the abs-linearization. 
We now verify our theory on a set of standard non-smooth test problems
from \cite{BaKaMae14}.
We specify the domain of interest $C$ as a 
polyhedron with fixed bounds for the purposes
of numerical experiments.
These bounds are not restrictive and may be adjusted as 
needed for other applications.
\paragraph{The MAXQ function}
The MAXQ function is a convex problem in dimension 20,
with objective function
\begin{equation*}
    f(x) = \max\limits_{1\leq i\leq 20} x_i^2.
\end{equation*}
The starting point is given componentwise by
\begin{equation*}
    \cx_i =
    \begin{cases}
        i, & i=1,\ldots,10,\\
        -i, & i=11,\ldots,20.
    \end{cases}
\end{equation*}
The bounds of the problem are fixed to $-20\leq x_i\leq 20$
for $1\leq i\leq20$, i.e.,
our convex and compact domain $C$ is a polyhedron with diameter $D = 40\sqrt{20}$.
\paragraph{Wong 2 function}
The Wong 2 function is a convex problem in dimension 10, 
with objective function
\begin{equation*}
    f(x) = \max\limits_{1\leq i\leq 9} f_i(x)\;,
\end{equation*}
where
\begin{align*} 
 f_1(x) =\; &  x_1^2+x_2^2+x_1x_2-14x_1-16x_2+(x_3-10)^2+4(x_4-5)^2 +(x_5-3)^2\\
  & +2(x_6-1)^2+5x_7^2+7(x_8-11)^2+2(x_9-10)^2+(x_{10}-7)^2+45, \\ 
 f_2(x) =\; & f_1(x) + 10(3(x_1-2)^2+4(x_2-3)^2+2x_3^2-7x_4-120), \\
 f_3(x) =\; &  f_1(x) + 10(5x_1^2+8x_2+(x_3-6)^2-2x_4-40), \\ 
 f_4(x) =\; &  f_1(x) + 10(0.5(x_1-8)^2+2(x_2-4)^2+3x_5^2-x_6-30),\\ 
 f_5(x) =\; &  f_1(x) + 10(x_1^2+2(x_2-2)^2-2x_1x_2+14x_5-6x_6),\\ 
 f_6(x) =\; &  f_1(x) + 10(4x_1+5x_2-3x_7+9x_8-105),\\ 
 f_7(x) =\; &  f_1(x) + 10(10x_1-8x_2-17x_7+2x_8),\\ 
 f_8(x) =\; &  f_1(x) + 10(-3x_1+6x_2+12(x_9-8)^2-7x_{10}),\\ 
 f_9(x) =\; &  f_1(x) + 10(-8x_1+2x_2+5x_9-2x_{10}-12).\\ 
\end{align*}
The starting point is $\cx = (2,3,5,5,1,2,7,3,6,10)$, and the bounds are $-10\leq x_i\leq 10$ for $1\leq i \leq 10$.
\paragraph{Chained CB3 I function}
The Chained CB3 I function is convex in arbitrary dimension $n$, with objective function
\begin{equation*}
    f(x) = \sum_{i=1}^{n-1} \max \{ x_i^4 + x_{i+1}^2,
(2-x_i)^2 + (2-x_{i+1})^2, 2e^{-x_i + x_{i+1}} \} .
\end{equation*}
The starting point is $\cx = (2,2,\ldots,2)$, and the bounds are $-5\leq x_i\leq 5$ for $1\leq i\leq n$.
\paragraph{Chained Mifflin 2 function}
The chained Mifflin 2 is a non-convex function of arbitrary dimension ($n$) with 
objective function
\begin{equation*}
   f(x) = \sum_{i=1}^{n-1} \big( -x_i +2(x_i^2+x_{i+1}^2-1) +1.75 | x_i^2 + x_{i+1}^2 - 1| \big).
\end{equation*}
The starting point is $\cx = (1,1,\ldots,1)$, which lies in a locally convex region of the function.
We set the bounds $-3\leq x_i\leq 3$ for $1\leq i \leq n$.
\begin{figure}[t]
\begin{center}
   \subfloat[][MAXQ function]{\includegraphics[scale=1]{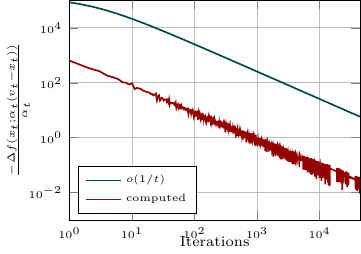}}\hspace{2ex}
\quad
   \subfloat[][Wong 2 function ]{\includegraphics[scale=1]{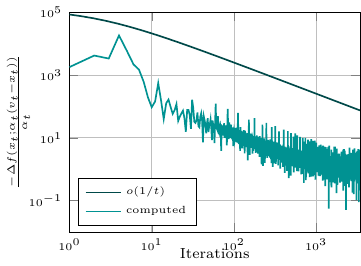}}
\end{center}
\caption{Dual gap $g_t$ versus iterations plotted in log-log scale.}
\label{fig:duals}
\end{figure}
An illustration of the convergence of the dual gaps $g_t$
is presented in Figure~\ref{fig:duals}.
These plots demonstrate that the observed 
convergence rate is consistent with the theoretical
predictions of Theorem~\ref{thm:p-d}.
\begin{figure}[t]
\begin{center}
\subfloat[][MAXQ function]{\includegraphics[scale=1]{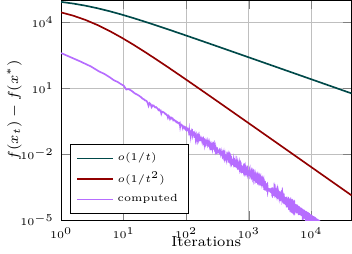}}\hspace{2ex}
\subfloat[][Wong 2 function]{\includegraphics[scale=1]{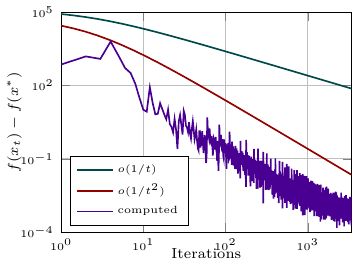}}
\quad
\subfloat[][Chained CB3 I function]{\includegraphics[scale=1]{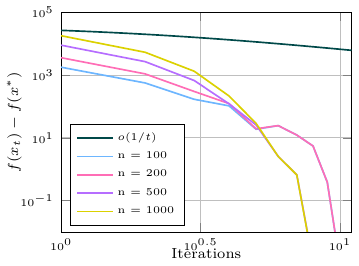}}\hspace{2ex}
\subfloat[][Chained Mifflin 2 function]{\includegraphics[scale=1]{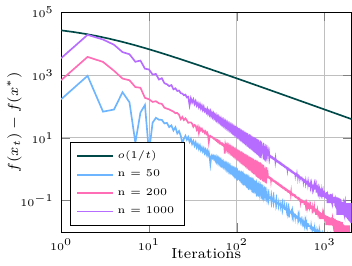}}
\end{center}
\caption{Primal gap $h_t$ against iterations for various functions plotted in log-log scale.}
\label{fig:primals}
\end{figure}

Figure~\ref{fig:primals} illustrates the behavior of the primal gap associated with
Theorem~\ref{thm:p-d} for the 
convex functions MAXQ, 
Wong 2 and 
Chained CB3 I for different $n$.
For the non-convex Chained Mifflin 2 function, we start in a
region where the model is a good local approximation
and as the chosen domain is relatively small,
the residual term is correspondingly small.
As a result, we observe that the primal gap $h_t$
decreases roughly as $\cO(1/t)$, which is qualitatively consistent with the residual-type behavior in
Corollary~\ref{thm:pd_cvx_pl}.
In addition, the primal gap of the MAXQ and Wong 2 test problems exhibits 
accelerated convergence behavior of $\cO(1/t^2)$, 
which is not yet theoretically understood.
In the smooth setting, it is well known that 
under stronger assumptions, such as strong convexity of 
$f$ or uniform convexity of the domain $C$, the rates exhibit an 
accelerated rate of $\cO(1/t^2)$.
Extending our analysis to capture such accelerated behavior,
particularly in the presence of approximate models
is an interesting direction that will be explored in future work.

For the plots in Figure~\ref{fig:duals} and
Figure~\ref{fig:primals}, the inner sub-problem in
Step~\ref{a:s2} of Algorithm~\ref{alg:asfw_hbasfw}
is solved to high accuracy by setting the inner-iteration
limit of the ASM solver to $100$. 
Equivalently, this can be interpreted as 
taking $\epsilon \approx 0$. 

\begin{table}[htbp]
\centering
\begin{tabular}{lclccr}
\toprule
Function ($n$) 
& Reference $f^*$
& max inner iter (ASM)
& \# iterations
& Relaxed $f(x^*)$
& \# simplex steps \\
\midrule
\multirow{2}{*}{MAXQ (20)}
 & \multirow{2}{*}{00.00}
 & 2
 & 16498
 & 3.348 $\times 10^{-6}$
 & 360546 \\
 & 
 & 10
 & 44010
 & 4.356 $\times 10^{-7}$
 & 976932 \\
 & 
 & 100 \text{(exact)}
 & 44010
 & 4.356 $\times 10^{-7}$
 & 1034172 \\
\multirow{2}{*}{Wong 2 (10)}
 & \multirow{2}{*}{24.3062}
 & 2
 & 2841
 & 24.30652
 & 34093 \\
 & 
 & 10 \text{(exact)}
 & 3411
 & 24.30643
 & 40912 \\
 & 
 & 100 
 & 3411
 & 24.30643
 & 40912 \\
 \multirow{2}{*}{CB3 I (500)}
 & \multirow{2}{*}{998.00}
 & 2
 & 6
 & 998.0000
 & 10479 \\
 & 
 & 10
 & 8
 & 998.0011
 & 50159 \\
 & 
 & 100 \text{(exact)}
 & 8
 & 998.0011
 & 171570 \\
\bottomrule
\end{tabular}
\caption{Reference and relaxed solutions for different inner iteration limits of 
the relaxed ASFW algorithm.}
\label{tab:cvx_examples}
\end{table}
In Table~\ref{tab:cvx_examples} and Table~\ref{tab:noncvx_examples}
we record the maximum inner iterations,
total iterations and solution $f(x^*)$
for all the test functions.
We also count the total number of simplex steps 
used by \texttt{HiGHS.jl} for these functions.
\begin{table}[t]
\centering
\begin{tabular}{lclccr}
\toprule
Function ($n$) 
& Reference $f^*$
& max inner iter (ASM)
& \# iterations
& Relaxed $f(x^*)$
& \# simplex steps \\
\midrule
  \multirow{2}{*}{Mifflin 2 (200)}
 & \multirow{2}{*}{-140.86}
 & 2
 & 1981
 & -140.8606
 & 596707 \\
 & 
 & 10
 & 1981
 & -140.8606
 & 2233106 \\
\bottomrule
\end{tabular}
\caption{Reference versus relaxed solutions for the relaxed ASFW algorithm on the non-convex Chained Mifflin 2 problem.}
\label{tab:noncvx_examples}
\end{table}
Table~\ref{tab:cvx_examples} reports numerical results for
the convex benchmark problems - MAXQ, Wong 2 and Chained CB3 I,
using the relaxed ASFW algorithm with
different limits on the number of inner iterations of 
the ASM method.
The results indicate that even a small number of 
inner iterations (e.g., 2) is sufficient to recover
highly accurate objective values, closely matching
the reference optima.
Increasing the number of inner iterations beyond this
threshold does not lead to noticeable improvements 
in solution quality, but substantially increases the
computational cost, as reflected by the higher number
of simplex steps.
This demonstrates that relatively loose inner solves
can already provide excellent practical performance on
convex instances.

Table~\ref{tab:noncvx_examples} presents verified results for the 
non-convex Chained Mifflin 2 problem with dimension $n=200$. 
Interestingly, the algorithm converges to nearly identical 
objective values for both reported inner iteration limits, 
suggesting that the outer Abs-Smooth Frank-Wolfe scheme is
robust to inexact inner solves even in the non-convex regime.
However, tightening the inner accuracy leads to a dramatic
increase in computational cost, with the number of simplex 
steps increasing by roughly a factor of four.
This highlights a clear trade-off between inner solve accuracy
and overall efficiency
and supports the use of relaxed inner solutions in practice.

We now compare ASFW to non-smooth solvers in \emph{Julia}. 
In fact, relatively few optimization algorithms in Julia are designed 
to handle non-smooth objectives efficiently.
While it is possible to compare \texttt{AbsSmoothFrankWolfe.jl} with
the gradient free Nelder-Mead algorithm \cite{nelder1965simplex} available 
with the optimization framework \texttt{Optim.jl} \cite{mogensen2020optim},
such a comparison is not entirely meaningful,
as Nelder-Mead is a general purpose derivative free optimizer
that does not exploit the structure of non-smooth functions.
Similarly, methods based on \texttt{ProximalAlgorithms.jl} \cite{proximal2, proximal1}
require identifying a smooth component of the objective and
computing its Lipschitz constant, which is nontrivial or
even impractical for functions such as Wong 2 and CB3 I. 
Therefore, the most natural and informative comparison for ASFW
is with the subgradient based Frank-Wolfe method,
which, like ASFW, is specifically designed to handle non-smooth objectives
without relying on Lipschitz constants or explicit gradients.
We thus focus our numerical evaluation on this method,
as it provides a meaningful and representative point of reference
for assessing the performance of ASFW.

We use the same starting point and feasible domain $C$ for these methods.
In view of the empirical observations reported above, we adopt a maximum
inner iteration limit of 2 throughout the remainder of the numerical experiments
and present results obtained under this configuration.
\begin{table}[t]
\centering
\begin{tabular}{lrcccc}
\toprule
Function ($n$) & Reference $f^*$
& \multicolumn{2}{c}{\texttt{AbsSmoothFrankWolfe.jl}}
& \multicolumn{2}{c}{\texttt{FrankWolfe.jl} with subgradients} \\
\cmidrule(lr){3-4} \cmidrule(lr){5-6}
& & \# iter & $f(x^*)$ 
& \# iter & $f(x^*)$  \\
\midrule
MAXQ (20) & 00.00  & 16498 & 3.348 $\times 10^{-6}$  
                & 20001* & 44.3329  \\
Wong 2 (10) & 24.3062  & 2841 & 24.30652
                & 10001* & 114.7234  \\
CB3 I (300) & 598.00  & 6 & 598.0000  
                & 10001* & 598.9557  \\
Mifflin 2 (1000) & -706.55  & 2024 & -706.5308  
                          & 10001*  & -704.9235  \\
\bottomrule
\end{tabular}
\caption{Performance comparison between ASFW and Frank-Wolfe with subgradients.}
\label{tab:fw_compare}
\end{table}
Table~\ref{tab:fw_compare} reports a comparison between \texttt{AbsSmoothFrankWolfe.jl}
and the subgradient based \texttt{FrankWolfe.jl} method on four non-smooth benchmark problems.
For each instance, we present the number of iterations and the final objective value,
together with a reference optimal value.
The subgradient based method fails to satisfy the stopping criterion within
the prescribed iteration budget and is therefore terminated at the maximum number
of iterations, as indicated by an asterisk (*).
In contrast, ASFW consistently converges to near-optimal objective values 
within a finite number of iterations across all test cases.
On  MAXQ and Wong~2, the subgradient method also ends with
substantially worse objective values.
On CB3~I and Mifflin~2, by contrast, the final objective values are competitive,
but the subgradient method still fails the stopping criterion within the iteration budget,
showing that objective values alone can mask a large residual Frank-Wolfe gap on non-smooth problems.
This illustrates the practical advantages of the proposed abs-smooth 
framework for non-smooth optimization.

\subsection{The LASSO problem}\label{p:lasso}
We now consider the well-known convex LASSO problem. 
The LASSO is the least-squares linear regression objective with an additional $\ell_1$ regularization term. 
The LASSO objective is
\begin{equation*}
   f(x) = \frac{1}{2}\|Ax - y\|^2 + \rho \|x\|_1 \;,
\end{equation*}
with $y\in \rr^p$ being a response vector, 
$A\in \rr^{p\times n}$ being a design matrix. 
The LASSO problem is the sum of a smooth convex function and a non-smooth convex function.

We evaluate the performance of the LASSO model using a real-world dataset,
specifically the diabetes dataset, in which the model is applied to predict 
the progression of diabetes based on several clinical and physiological predictors.
The diabetes dataset is a widely used benchmark dataset introduced 
in \cite{lasso_diabetes}. 
It consists of clinical and physiological measurements from patients, 
along with a continuous response variable representing disease progression 
one year after baseline. The dataset is commonly used to evaluate regression 
and variable selection methods, including regularization techniques such as LASSO.
For the diabetes benchmark in Table~\ref{tab:lasso_diabetes}, we accessed the dataset
through OpenML \cite{OpenML44223}, 
which provides a convenient,
standardized format for machine learning tasks.

\begin{table}[h]
\centering
\begin{tabular}{lcccr}
\toprule
$\rho$ & \# iter & intercept & MSE & \# simplex \\
\midrule
0.1  & 17692  & 152.13348 & 2865.00132 & 178381 \\
0.5  & 17250  & 152.13348 & 2865.00687 & 174085 \\
1    & 19063  & 152.13348 & 2865.00356 & 192378 \\
5    &  21306 & 152.13348 & 2865.00409 & 214684 \\
10    & 20976  & 152.1334 & 2865.00745 & 211394 \\
\bottomrule
\end{tabular}
\caption{MSE and intercept for various $\rho$ on the diabetes dataset.}
\label{tab:lasso_diabetes}
\end{table}

For the diabetes dataset, the design matrix $A \in \rr^{442\times 10}$
consists of ten standardized clinical and physiological predictors, while the response vector 
$y \in \rr^{442}$ represents the quantitative measure of disease progression one year after baseline.
An intercept term captures the baseline disease progression and 
is expected to be close to the mean response value, 
approximately $152.13$ for this dataset.
Model performance is assessed using the mean squared error (MSE), 
which is typically in the range
$2859-2879$ after training.
We record the intercept and MSE for different values 
of $\rho$ in Table~\ref{tab:lasso_diabetes}.

\section{Summary and outlook}
In this work, we introduced the vanilla, relaxed vanilla, 
and momentum based variants of the ASFW algorithm, 
a robust framework for solving non-smooth optimization problems.
The proposed methods are specifically tailored to address
the challenges posed by non-differentiability and 
the absence of smooth structure, which commonly arise in practical applications.
Through rigorous theoretical analysis, we established that
the relaxed vanilla and heavy ball variants admit matching $\cO(1/t)$ bounds
on the approximate duality gap, analogous to the classical Frank-Wolfe rate for smooth
convex objectives. 
Furthermore, when only the piecewise linearizations are convex,
the ADGT analysis still yields a residual-type estimate of the form
$G_t \le \cc_f/2 + \cO(1/t)$.
This bound can be weak when the model mismatch is large, but it nevertheless
provides a robustness guarantee beyond the fully convex setting.
Notably, this performance is attained without
relying on smoothing techniques or requiring access to
generalized gradient information, thereby preserving the original
problem structure and enhancing the applicability of
ASFW to a broad class of non-smooth optimization problems.

Our study raises several important questions that warrant 
further investigation and form the basis for future research.
Future work will therefore focus on addressing these open 
questions, extending the theoretical foundations of ASFW, 
and applying the algorithm to large-scale machine learning
problems. 
In particular, we aim to study accelerated convergence regimes
and explore different agnostic step-size strategies.
Finally, we plan to investigate algorithmic variants 
such as the away-step ASFW and stochastic ASFW, which are well suited for 
high-dimensional and data-driven applications.

\section*{Acknowledgments}
This project is funded by the Deutsche Forschungsgemeinschaft (DFG, German Research
Foundation) under Germany's Excellence Strategy - The Berlin Mathematics
Research Center MATH+ (EXC-2046/1, EXC-2046/2, project ID: 390685689).

\printbibliography

\end{document}